\providecommand{\keywords}[1]
{
  \small	
  \textbf{\textit{Keywords---}} #1
}
\definecolor{fxnote}{rgb}{0.0000,0.0000,1.0000}
\definecolor{fxwarning}{rgb}{.2000,0.60,0.0000}
\definecolor{fxerror}{rgb}{.900,0.4706,0.0000}
\definecolor{fxfatal}{rgb}{1.000,0.0000,0.0000}
\newtheorem{theorem}{Theorem}[section]
\newtheorem{lemma}[theorem]{Lemma}
\theoremstyle{definition}
\newtheorem{definition}[theorem]{Definition}
\theoremstyle{remark}
\newtheorem{remark}[theorem]{Remark}
\numberwithin{equation}{section}
\let\rho\varrho
\let\phi\varphi
\let\epsilon\varepsilon
\newcommand{\dx}{\,dx}
\newcommand{\dtx}{\,d\tilde x}
\renewcommand{\P}{\mathbb P}
\newcommand{\R}{\mathbb R}
\newcommand{\abs}[1]{\left|#1\right|}
\newcommand{\norm}[2]{\left\|#1\right\|_{#2}}
\newcommand{\bc}[2]{\left(\begin{array}{c} #1\\#2\end{array}\right)}
\def\interval{\mathcal I}
\def\nodal{\mathcal N}
\newcommand{\cint}{I}
\newcommand{\wint}{\Pi}
\newcommand{\winttens}[2]{\Pi^{\otimes #1}_{#2}}
\newcommand\Alt{\operatorname{Alt}}
\newcommand{\complextens}[3]{(#1\Lambda^{\otimes #2})^{#3}}
\newcommand{\complexPtens}[2]{(\P\Lambda^{\otimes #1})^{#2}}
\newcommand{\complexLtens}[2]{(L^2\Lambda^{\otimes #1})^{#2}}
\newcommand{\complexH}[1]{\left(H\Lambda\otimes H\Lambda\right)^{#1}}
\newcommand{\complexHtens}[2]{(H\Lambda^{\otimes #1})^{#2}}
\newcommand{\dual}[3]{\left(#1,#2\right)_{#3}}
\newcommand{\range}{\operatorname{Im}}
\newcommand{\kernel}{\operatorname{Ker}}
\begin{document}
% \listoffixmes

\title[$H^1$-conforming finite element cochain complex]{$H^1$-conforming finite element cochain complexes and commuting quasi-interpolation operators on Cartesian meshes}

\author{Francesca Bonizzoni}
\address{Faculty of Mathematics, University of Vienna, Oskar-Morgenstern-Platz 1,1090 Wien, Austria}
\email{francesca.bonizzoni@univie.ac.at}
\thanks{F. Bonizzoni acknowledges partial support from the Austrian Science Fund (FWF) through the project F 65, and has been supported by the FWF Firnberg-Program, grant T998.}

%    author two information
\author{Guido Kanschat}
\address{Interdisciplinary Center for Scientific Computing (IWR), Heidelberg University, Klaus-Tschira-Platz 1, 69120 Heidelberg, Germany}
\curraddr{}
\email{kanschat@uni-heidelberg.de}
\thanks{The authors acknowledge support by the Erwin Schrödinger Institute (ESI) at University of Vienna through the Thematic Programme ``Numerical Analysis of Complex PDE Models in the Sciences''.}

%    \subjclass is required.
\subjclass[2010]{Primary 65N30}

% 65N30  	Finite element, Rayleigh-Ritz and Galerkin methods for boundary value problems involving PDEs

\date{}

\dedicatory{}

%    Abstract is required.
\begin{abstract}
  A finite element cochain complex on Cartesian meshes of any dimension based on the $H^1$-inner product is introduced. It yields $H^1$-conforming finite element spaces with exterior derivatives in $H^1$. We use a tensor product construction to obtain $L^2$-stable projectors into these spaces which commute with the exterior derivative. The finite element complex is generalized to a family of arbitrary order.
\end{abstract}

\keywords{finite element exterior calculus, de Rham complex, commuting diagram property, quasi-interpolation, tensor product}

\maketitle

\section{Introduction}

We present a family of finite element cochain complexes in $H^1(\Omega)$ on Cartesian meshes. By adhering to a strict tensor product construction, we obtain commuting interpolation operators which are bounded on $L^2(\Omega)$.

It has been pointed out for instance in~\cite{SchroederLehrenfeldLinkeLube18} that the reliable computation of high Reynolds number incompressible flow hinges on pressure robustness of the discretization, which in turn is guaranteed by exact implementation of the divergence condition. We have demonstrated the importance of the cochain property for error estimates and adaptive mesh refinement in~\cite{KanschatSharma14,SharmaKanschat18}. These works have in common that they rely on divergence-conforming discontinuous Galerkin methods, which started with~\cite{HansboLarson02,CockburnKanschatSchoetzau07}. Thus, they are consistent with the Laplacian, but not conforming in $H^1(\Omega)$.

Due to the importance of the divergence constraint, considerable effort was put into the development of $H^1$-conforming methods with exact divergence constraint in recent years. In particular on simplicial meshes, there is a wide variety of methods. We refer the reader to the recent review~\cite{Neilan20} and the literature cited therein.

The use of the Raviart-Thomas polynomial space $\mathbb Q_{3,2}\times \mathbb Q_{2,3}$ with node functionals yielding $H^1$-conforming finite elements on rectangular meshes goes back to~\cite{AustinManteuffelMcCormick04}. They already use a tensor product of Hermitian and Lagrangian interpolation on each rectangle, such that the divergence is in the space of continuous functions and cellwise in $\mathbb Q_{2,2}$.

The same polynomial space for the velocity, but with different degrees of freedom is used in~\cite{Zhang09}, but with an implicitly defined pressure space. The author obtains a solution by a procedure which does not require setting up a basis for the pressure space, and thus the construction is valid. Nevertheless, the discretization spaces are bound to a specific solution scheme for the discrete problem. This was overcome later in~\cite{NeilanSap16} by using partly Hermitian interpolation, thus obtaining a local characterization of the pressure space. They use Hermitian degrees of freedom in vertices, but only Lagrangian on edges, such that the pressure space can be discontinuous. As a result, the velocity space does not result from tensorization of one-dimensional elements, which is one of our construction principles.
Also the inf-sup stable Stokes pair of finite elements yielding diverge-free solutions in~\cite{NeilanSap18} does not have tensor product structure.

Discretization spaces which are $H^1$-conforming and yield diverge-free solutions have been object of the Isogeometric Analysis (IGA) literature, too. In this framework, tensor product meshes and spline-based approximation spaces are considered. We refer to~\cite{BuffaDeFalcoSangalli11} for IGA techniques applied to the Stokes problem, and to~\cite{EvansHughes13,EvansHughesUnstaedy13} for applications to the steady and unsteady Navier-Stokes equations.

Quasi-interpolation operators for the element from~\cite{AustinManteuffelMcCormick04} which commute with the divergence were first introduced in~\cite{SharmaKanschat18}.
Here, we systematically reconstruct the canonical interpolation operators used there and generalize them to any space dimension and forms of any index.

While all these publications were concerned with $H^1$-conforming elements with controllable divergence in two and three dimensions, this paper is concerned with the full finite element cochain complex on Cartesian meshes of arbitrary dimension, such that for each finite element form $u_h\in V^k \subset H^1\Lambda^k(\Omega) $ its exterior derivative is $d u_h \in V^{k+1} \subset H^1\Lambda^{k+1}(\Omega)$. Based on a general lemma on the cochain property of interpolation operators, we provide commuting interpolation operators for differentiable functions as well as commuting quasi-interpolation operators which are continuous on $L^2$.
For the latter, we follow the route laid out in~\cite{Schoeberl01,Schoeberl,Schoeberl10} in one dimension and tensorize afterwards.

This article is laid out as follows: after some preliminaries in section~\ref{sec:notation} we present a general construction principle for commuting interpolation operators in section \ref{sec:construction}. The one-dimensional finite element cochain complex based on cubic polynomials with Hermitian interpolation is outlined in section~\ref{sec:1d_complex_I} and its quasi-interpolation operators are introduced in section~\ref{sec:quasi_interpolation}. The tensorization for higher-dimensional complexes is presented in sections~\ref{sec:tensor_complex} and~\ref{sec:tensor-quasi-interpolation}, respectively. Section~\ref{sec:higher-order} presents the extension to higher order polynomial spaces.

%%%%%%%%%%%%%%%%%%%%%%%%%%%%%%%%%%%%%%%%%%%%%%%%%%%%%%
\section{Notation and preliminaries}
\label{sec:notation}
%%%%%%%%%%%%%%%%%%%%%%%%%%%%%%%%%%%%%%%%%%%%%%%%%%%%%%

Following~\cite{ArnoldFalkWinther2006}, we introduce the notation and definitions concerning the finite element exterior calculus that we will need throughout the paper. Let $n\geq 1$ and $0\leq k\leq n$ integers. We denote with $\Alt^k\R^n$ the space of alternating $k$-linear forms on $\R^n$, with inner product $\dual{\cdot}{\cdot}{\Alt^k\R^n}$.
Let $\Omega$ be a $n$-dimensional open bounded 
subset of $\R^n$. A differential $k$-form on $\Omega$ is a map $u$ which associates to each $x\in\Omega$ an element $u_x\in\Alt^k\R^n$. It can be expressed uniquely as
\begin{equation}
    \label{eq:u_form}
    u=\sum_{\sigma\in\Sigma(k,n)} u_\sigma\, dx^\sigma,
\end{equation}
where $u_\sigma$ are coefficient functions defined on $\Omega$, and $\Sigma(k,n)$ is the set of increasing maps $\{1,\ldots,k\}\rightarrow\{1,\ldots,n\}$. The set $\{dx^1,\ldots,dx^n\}$ denotes the basis of $\Alt^1\R^n=(\R^n)^*$ dual to the canonical basis, and $dx^\sigma=dx^{\sigma_1}\wedge\cdots\wedge dx^{\sigma_k}\in \Alt^k\R^n$.
We denote with $\Lambda^k(\Omega)$ the space of smooth differential $k$-forms, i.e., the space of $k$-forms with smooth coefficient functions.

Let $d^k:\Lambda^k(\Omega)\rightarrow\Lambda^{k+1}(\Omega)$ be the exterior derivative, i.e, the linear map which associates $u\in\Lambda^k(\Omega)$ as in~\eqref{eq:u_form} to $d^k u\in\Lambda^{k+1}(\Omega)$ given by
\begin{equation*}
    d^k u = \sum_{\sigma\in\Sigma(k,n)}\sum_{j=1}^n
    \frac{\partial u_\sigma}{\partial x_j}\ dx^j\wedge dx^\sigma.
\end{equation*} 
In the following, when no confusion occurs, we will denote the exterior derivative simply as $d$, suppressing the superscript $k$.

Given $\mathcal F(\Omega)$ a space of functions defined on $\Omega$, we denote with $\mathcal F\Lambda^k(\Omega)$ the space of differential $k$-forms with coefficients in $\mathcal F(\Omega)$. 
As examples, we mention 
the space of $C^m$-regular differential $k$-forms $C^m\Lambda^k(\Omega)$, the space of $L^2(\Omega)$-integrable $k$-forms $L^2\Lambda^k(\Omega)$, and the space of polynomial differential $k$-forms $\P_m\Lambda^k(\Omega)$.

The space $L^2\Lambda^k(\Omega)$ is a Hilbert space, with inner product
\begin{align}
\label{eq:L2_inner_prod}
    \dual{\cdot}{\cdot}{L^2\Lambda^k}&:L^2\Lambda^k(\Omega)\times L^2\Lambda^k(\Omega)\rightarrow\R\\
    \nonumber
    \dual{u}{v}{L^2\Lambda^k}&=\sum_{\sigma\in\Sigma(k,n)}\dual{u_\sigma}{v_\sigma}{L^2(\Omega)}.
\end{align}
We define the space $H\Lambda^k(\Omega)$ as 
\begin{equation}
    \label{eq:Hlambda}
    H\Lambda^k(\Omega)
    :=\left\{
    u\in L^2\Lambda^k(\Omega)\, |\, du\in L^2\Lambda^{k+1}(\Omega)
    \right\}.
\end{equation}
It is a Hilbert space, with the inner product
\begin{align}
\label{eq:H_inner_prod}
    \dual{\cdot}{\cdot}{H\Lambda^k}&:H\Lambda^k(\Omega)\times H\Lambda^k(\Omega)\rightarrow\R\\
    \nonumber
    \dual{u}{v}{H\Lambda^k}&=\dual{u}{v}{L^2\Lambda^k}+\dual{du}{dv}{L^2\Lambda^{k+1}}.
\end{align}

The extended de Rham complex is the following sequence of spaces and maps:
\begin{gather}
\label{eq:exact_Lk}
    0\xrightarrow{\;\;\subset\;\;}\R\xrightarrow{\;\;\subset\;\;}
    \Lambda^0(\Omega) \xrightarrow{\;\; d \;\;} \Lambda^1(\Omega)\xrightarrow{\;\;d\;\;}\cdots
    \xrightarrow{\;\; d\;\;}
    \Lambda^n(\Omega)\xrightarrow {\;\; d\;\;} 0.
\end{gather}

From the relation $d\circ d=0$, it follows that
\begin{equation*}
    \range\left(d^{k-1}\right)\subset \kernel\left(d^{k}\right),    
\end{equation*}
where $\range$ and $\kernel$ denote the range and the kernel, respectively.
In the case of a contractible domain, the sequence~\eqref{eq:exact_Lk} is exact, meaning that 
\begin{equation*}
    \range\left(d^{k-1}\right)= \kernel\left(d^{k}\right), 
\end{equation*}
In the case of a noncontractible domain, the codimension of $\range(d)$ in $\kernel(d)$ is equal to the corresponding Betti number.

The complex
\begin{gather}
    \label{eq:complex_H}
    0\xrightarrow{\;\;\subset\;\;}\R
    \xrightarrow{\;\;\subset\;\;} H\Lambda^0(\Omega)
    \xrightarrow{\;\; d\;\;} H\Lambda^1(\Omega)
    \xrightarrow{\;\;d\;\;}\cdots
    \xrightarrow{\;\; d\;\;} H\Lambda^n(\Omega)
    \xrightarrow{\;\;d\;\;}0
\end{gather}
is the $L^2$ de Rham complex on $\Omega$.

In the same way, given $m\geq n$ integer, it is possible to construct a cochain complex with the set of spaces $\{C^{m-k}\Lambda^k(\Omega),\ k=0,\ldots,n\}$, as follows
\begin{gather}
    \label{eq:nD_complex}
    0\xrightarrow{\;\;\subset\;\;}
    \R\xrightarrow{\;\;\subset\;\;}
    C^m\Lambda^0(\Omega)
    \xrightarrow{\;\; d\;\;}
    C^{m-1}\Lambda^1(\Omega)
    \xrightarrow{\;\;d\;\;}\cdots
    % \xrightarrow{\;\; d\;\;}
    C^{m-n}\Lambda^n(\Omega)\xrightarrow{\;\; d\;\;} 0.
\end{gather}

%%%%%%%%%%%%%%%%%%%%%%%%%%%%%%%%%%%%%%%%%%%%%%%%%%%%%%%%%
\section{Construction of a commuting interpolation operator}
\label{sec:construction}
%%%%%%%%%%%%%%%%%%%%%%%%%%%%%%%%%%%%%%%%%%%%%%%%%%%%%%%%%

Let $0\leq k<n$ be fixed, and let $\P\Lambda^k(\Omega)$ and $\P\Lambda^{k+1}(\Omega)$ be polynomial forms such that $d\P\Lambda^k(\Omega) \subset \P\Lambda^{k+1}(\Omega)$.
Denote with $r$ the dimension of the range of $d$, that is, by the rank-nullity theorem, $r = \dim\P\Lambda^k-\dim\kernel d$.
Moreover, let $\cint_k\colon \Lambda^k(\Omega)\rightarrow\P\Lambda^k(\Omega)$ and $\cint_{k+1}\colon \Lambda^{k+1}(\Omega)\rightarrow\P\Lambda^{k+1}(\Omega)$
denote the interpolation operators defined by
\begin{gather}
\label{eq:cint}
    \cint_j u = \sum_{i=1}^{\dim\P\Lambda^j} \nodal^j_i(u)\phi^j_i,\qquad j=k,k+1,
\end{gather}
where 
\begin{itemize}
\item $\{\phi^j_i\}_{i=1,\ldots,\dim\P\Lambda^j(\Omega)}$ is a basis for $\P\Lambda^j(\Omega)$;
\item $\{\nodal^j_i\}_{i=1,\ldots,\dim\P\Lambda^j(\Omega)}$ is a set of node functionals in $(\Lambda^j(\Omega))^*$;
\item both sets are chosen such that there holds the interpolation condition
\begin{gather}
    \label{eq:interpolation-condition}
    \nodal^j_i(\phi^j_m) = \delta_{im}.
\end{gather}
\end{itemize}
The following lemma gives sufficient conditions on $\{\phi^j_i\}_i$ and $\{\nodal^j_i\}_i$ such that 
\begin{equation*}
\begin{tikzcd}[column sep=3pc]
\Lambda^k(\Omega) \arrow{r}{d} \arrow{d}{I} & 
  \Lambda^{k+1}(\Omega) \arrow{d}{I} \\
\P\Lambda^k(\Omega) \arrow{r}{d} &
  \P\Lambda^{k+1}(\Omega)
\end{tikzcd}
\end{equation*}
is a commuting diagram. Here we used the convention that $I=I_k$ if its domain is $\Lambda^k$.
It simplifies the construction of commuting quasi-in\-ter\-po\-lation operators, since it later on only requires the transformation of node functionals to perturbed cells, not the transformation of basis functions.

\begin{lemma}
\label{lemma:commute_nodal}
With the notations introduced above,
assume that the bases $\{\phi^k_i\}$ and $\{\phi^{k+1}_i\}$ fulfill
  \begin{gather}
    \label{eq:commute-basis}
      \begin{aligned}
          d\phi^k_i &= \phi^{k+1}_i & \qquad i&=1,\dots,r,\\
          d\phi^k_i &= 0 & i&=r+1,\dots,\dim\P\Lambda^{k}(\Omega).
      \end{aligned}
  \end{gather}
Moreover, assume that the node functionals $\{\nodal^k_i\}$ and $\{\nodal^{k+1}_i\}$ fulfill for any $u\in \Lambda^k(\Omega)$
  \begin{gather}
    \label{eq:commute-nodal}
      \begin{aligned}
      \nodal^{k+1}_i(d u) &= \nodal^k_i(u)& \qquad i&=1,\dots,r,\\
      \nodal^{k+1}_i(d u) &= 0 & i&=r+1,\dots,\dim\P\Lambda^{k+1}.
      \end{aligned}
  \end{gather}
   Then, the interpolation operator $\cint$ defined in~\eqref{eq:cint} commutes with the exterior derivative $d$, namely, there holds:
  \begin{gather}
    d^k \cint_k u = \cint_{k+1} d^k u \qquad \forall u \in \Lambda^k(\Omega).
  \end{gather}
\end{lemma}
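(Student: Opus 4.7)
The plan is to expand both sides in terms of the bases $\{\phi^k_i\}$ and $\{\phi^{k+1}_i\}$ and show term-by-term agreement, using \eqref{eq:commute-basis} to simplify the left-hand side and \eqref{eq:commute-nodal} to simplify the right-hand side.

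First I would start from the definition in \eqref{eq:cint}, apply $d$ to $I_k u$, and pull the differential inside the finite sum (using linearity of $d$ and the fact that $\nodal^k_i(u)\in\R$ is a scalar):
\begin{equation*}
  d\, I_k u = \sum_{i=1}^{\dim\P\Lambda^k} \nodal^k_i(u)\, d\phi^k_i.
\end{equation*}
Applying the hypothesis \eqref{eq:commute-basis}, the terms with $i>r$ drop out because $d\phi^k_i=0$, and the remaining ones simplify to $\phi^{k+1}_i$, giving
\begin{equation*}
  d\, I_k u = \sum_{i=1}^{r} \nodal^k_i(u)\, \phi^{k+1}_i.
\end{equation*}

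Next I would expand the right-hand side. Since $du\in\Lambda^{k+1}(\Omega)$, the definition of $I_{k+1}$ from \eqref{eq:cint} yields
\begin{equation*}
  I_{k+1}(du) = \sum_{i=1}^{\dim\P\Lambda^{k+1}} \nodal^{k+1}_i(du)\, \phi^{k+1}_i.
\end{equation*}
Applying now \eqref{eq:commute-nodal}, the terms with $i>r$ vanish, while for $i\leq r$ the coefficient $\nodal^{k+1}_i(du)$ equals $\nodal^k_i(u)$, so
\begin{equation*}
  I_{k+1}(du) = \sum_{i=1}^{r} \nodal^k_i(u)\, \phi^{k+1}_i.
\end{equation*}
Comparing the two expressions gives $d\, I_k u = I_{k+1}\, du$, as required.

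The proof is essentially a direct unpacking of the definitions and there is no real obstacle: the two hypotheses \eqref{eq:commute-basis} and \eqref{eq:commute-nodal} are tailored precisely so that the sums on both sides collapse to the same indexed sum over $i=1,\dots,r$. The only subtle point worth noting is that the ordering of the bases and functionals is crucial, in particular that $\phi^{k+1}_1,\dots,\phi^{k+1}_r$ form a basis of the range of $d$ restricted to $\P\Lambda^k(\Omega)$, which is consistent with the rank-nullity statement on $r$ given before the lemma, and that the labelling of the $\nodal^{k+1}_i$ for $i\leq r$ is compatible with this ordering through \eqref{eq:commute-nodal}.
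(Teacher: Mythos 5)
Your proof is correct and follows exactly the same route as the paper's: expand both sides via \eqref{eq:cint}, collapse each sum to the first $r$ terms using \eqref{eq:commute-basis} on the left and \eqref{eq:commute-nodal} on the right, and match coefficients. No gaps.
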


\begin{proof}
  By linearity, we have
  \begin{gather}
      d^k\cint_k u = d^k\left(\sum_{i=1}^{\dim\P\Lambda^k}\nodal^k_i(u) \phi^k_i\right)
      = \sum_{i=1}^{\dim\P\Lambda^k}\nodal^k_i(u) d^k \phi^k_i
      = \sum_{i=1}^{r}\nodal^k_i(u) \phi^{k+1}_i.
  \end{gather}
  On the other hand,
  \begin{gather}
      \cint_{k+1} d^k u = \sum_{i=1}^{\dim\P\Lambda^{k+1}}\nodal^{k+1}_i(d^k u) \phi^{k+1}_i
      = \sum_{i=1}^{r}\nodal^{k+1}_i(d^ku) \phi^{k+1}_i.
  \end{gather}
  Employing~\eqref{eq:commute-nodal} concludes the proof.
\end{proof}

The lemma states, that we can construct commuting interpolation operators in five steps:
\begin{enumerate}
    \item Choose node functionals for $\range d \subset \P\Lambda^{k+1}$.
    \item Choose node functionals for $\P\Lambda^k$ according to~\eqref{eq:commute-nodal}.
    \item Choose a basis for $\range d \subset \P\Lambda^{k+1}$ such that interpolation condition~\eqref{eq:interpolation-condition} holds.
    \item Choose a basis for $\P\Lambda^k$ according to~\eqref{eq:interpolation-condition} and~\eqref{eq:commute-basis}.
    \item Choose the remaining basis functions and node functionals such that~\eqref{eq:commute-basis} and~\eqref{eq:commute-nodal} hold.
\end{enumerate}

\begin{remark}
\label{rem:Cm}
Lemma~\ref{lemma:commute_nodal} applies naturally to 
\begin{gather}
    \begin{aligned}
        \cint_k&\colon& C^{m-k}\Lambda^k(\Omega)&\rightarrow\P\Lambda^k(\Omega)\\
        \cint_{k+1}&\colon& C^{m-k-1}\Lambda^{k+1}(\Omega)&\rightarrow\P\Lambda^{k+1}(\Omega)
    \end{aligned}
\end{gather}
for all $m\geq k+1$, entailing the commutativity of the following diagram
\begin{equation*}
\begin{tikzcd}[column sep=3pc]
C^{m-k}\Lambda^k(\Omega) \arrow{r}{d} \arrow{d}{I} & 
  C^{m-k-1}\Lambda^{k+1}(\Omega) \arrow{d}{I} \\
\P\Lambda^k(\Omega) \arrow{r}{d} &
  \P\Lambda^{k+1}(\Omega)
\end{tikzcd}
\end{equation*}
\end{remark}

%%%%%%%%%%%%%%%%%%%%%%%%%%%%%%%%%%%%%%%%%%%%%%%%%%%%%%%%%%%%%%
\section{The one-dimensional complex on the reference interval}
\label{sec:1d_complex_I}
%%%%%%%%%%%%%%%%%%%%%%%%%%%%%%%%%%%%%%%%%%%%%%%%%%%%%%%%%%%%%%

Within this section we take $\Omega$ equal to the unit interval $\interval=[0,1]$, and $m=1$. Then the cochain complex~\eqref{eq:nD_complex} becomes the exact sequence
\begin{gather}
    \label{eq:1D_complex}
    0 \xrightarrow{\;\;\subset \;\;}\R
    \xrightarrow{\;\; \subset \;\;}
    C^1\Lambda^0(\interval)
    \xrightarrow{\;\; d \;\;}
    C^0\Lambda^1(\interval)
    \xrightarrow{\;\; d \;\;}
    0.
\end{gather}

We discretize the spaces $C^1\Lambda^0(\interval)$ and $C^0\Lambda^1(\interval)$ by polynomial spaces
\begin{gather}
    \P_3\Lambda^0(\interval) = \P_3(\interval)
    \qquad\text{and}\qquad
    \P_2\Lambda^1(\interval) = \P_2(\interval),
\end{gather}
respectively. In particular, we consider 
\begin{itemize}
    \item the space $\P^3\Lambda^0(\interval)=\P_3(\interval)$ with the interpolation operator $\cint_0\colon C^1\Lambda^0(\interval)\to\P_3\Lambda^0(\interval)$ defined by the conditions $\nodal^0_i \cint_0 u = \nodal^0_i u$, for $i=1,2,3,4$, where the node functionals $\nodal^0_i:C^1\Lambda^0(\interval)\rightarrow\R$ are given by a modified Hermitian interpolation:
    \begin{gather}
        \label{eq:i0-1d}
        \begin{aligned}
        \nodal^0_1 (u) &= u'(0) \qquad& \nodal^0_3 (u) &= u(1)-u(0)\\
        \nodal^0_2 (u) &= u'(1) & \nodal^0_4 (u) &= u(1)+u(0)
        \end{aligned}
    \end{gather}
    \item the space $\P_2\Lambda^1(\interval) = \P_2(\interval)$ with the interpolation operator $\cint_1\colon C^0\Lambda^1(\interval)\to\P_2\Lambda^1(\interval)$ defined by the conditions $\nodal^1_j \cint_1 v = \nodal^1_j v$, for $j=1,2,3$, where the node functionals $\nodal^1_j:C^0\Lambda^1(\interval)\rightarrow\R$ are given by:
    \begin{gather}
        \label{eq:i1-1d}
        \begin{split}
            \nodal^1_1 (v) &= v(0)\\
            \nodal^1_2 (v) &= v(1)
        \end{split}
        \qquad \nodal^1_3 (v) = \int_{\interval} v(x)\dx.
    \end{gather}
\end{itemize}

The second set of node functionals is a well-known alternative to Lagrange interpolation and it yields a unisolvent finite element. The basis of $\P_2(\interval)$ dual to the set of node functionals in~\eqref{eq:i1-1d} is:
\begin{gather}
    \label{eq:basis-p2}
    \phi^1_1(x)=1-4x+3x^2,\quad\phi^1_2(x)=-2x+3x^2,\quad\phi^1_3(x)=6x-6x^2.
\end{gather}

By straight forward computation, we obtain the following result.
\begin{lemma}
  If in $\P_3(\interval)$ we choose the basis
  \begin{gather}
    \label{eq:basis-p3}
      \begin{aligned}
      \phi^0_1(x) &= x-2x^2+x^3 \qquad& \phi^0_3(x) &= -\tfrac{1}{2}+3x^2-2x^3\\
      \phi^0_2(x) &= -x^2+x^3 & \phi^0_4(x) &= \tfrac12
      \end{aligned}
  \end{gather}
  there holds $\nodal^0_i(\phi^0_j) = \delta_{ij}$, and the element is unisolvent.
\end{lemma}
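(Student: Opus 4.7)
The plan is to prove the lemma by direct verification of the duality relations $\nodal^0_i(\phi^0_j)=\delta_{ij}$, followed by a brief linear-algebra argument that turns this duality into unisolvence. Since $\dim\P_3(\interval)=4$ and we have exactly four node functionals and four candidate basis functions, the lemma amounts to checking a $4\times 4$ matrix.

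Concretely, I would proceed as follows. For each $j\in\{1,2,3,4\}$, first record the four scalar quantities that determine every functional in~\eqref{eq:i0-1d}: the point values $\phi^0_j(0)$, $\phi^0_j(1)$ and the derivative values $(\phi^0_j)'(0)$, $(\phi^0_j)'(1)$. For the cubic $\phi^0_1(x)=x-2x^2+x^3$ one gets $(0,0,1,0)$, for $\phi^0_2(x)=-x^2+x^3$ one gets $(0,0,0,1)$, for $\phi^0_3(x)=-\tfrac12+3x^2-2x^3$ one gets $(-\tfrac12,\tfrac12,0,0)$, and for $\phi^0_4(x)=\tfrac12$ one gets $(\tfrac12,\tfrac12,0,0)$. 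Substituting these into $\nodal^0_1(u)=u'(0)$, $\nodal^0_2(u)=u'(1)$, $\nodal^0_3(u)=u(1)-u(0)$, $\nodal^0_4(u)=u(1)+u(0)$ immediately yields the Kronecker identity $\nodal^0_i(\phi^0_j)=\delta_{ij}$. This is the entirety of the computation; it is purely arithmetic and essentially mechanical.

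For unisolvence, I would argue as follows. The matrix $M_{ij}=\nodal^0_i(\phi^0_j)$ has just been shown to equal the identity, so the set $\{\phi^0_1,\phi^0_2,\phi^0_3,\phi^0_4\}$ is linearly independent: a relation $\sum_j c_j \phi^0_j=0$ applied to $\nodal^0_i$ gives $c_i=0$. Since the cardinality matches $\dim\P_3(\interval)=4$, this set is indeed a basis. Moreover, for any $p\in\P_3(\interval)$ with $\nodal^0_i(p)=0$ for all $i$, expanding $p=\sum_j c_j\phi^0_j$ in this basis and applying $\nodal^0_i$ gives $c_i=0$ for each $i$, so $p=0$; this is precisely unisolvence of the degrees of freedom.

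There is no real obstacle here: the content of the lemma is that the explicit basis in~\eqref{eq:basis-p3} has been designed to be dual to the modified Hermitian functionals in~\eqref{eq:i0-1d}, and the proof is essentially the sixteen-entry table one fills in during step (4) of the construction recipe following Lemma~\ref{lemma:commute_nodal}. The only thing one has to be careful about is the sign conventions in $\nodal^0_3$ and $\nodal^0_4$, which are the symmetric and antisymmetric combinations of the endpoint values rather than the usual Lagrange evaluations; the factor of $\tfrac12$ in $\phi^0_3$ and $\phi^0_4$ is exactly what makes these combinations dual to $1$.
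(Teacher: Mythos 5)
Your proposal is correct and matches the paper's intent: the paper gives no written proof, stating only that the result follows ``by straight forward computation,'' and your explicit evaluation of the sixteen entries $\nodal^0_i(\phi^0_j)$ together with the standard duality-implies-unisolvence argument is exactly that computation carried out. All the recorded point and derivative values check out, so nothing further is needed.
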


In this framework, the interpolation operators defined in~\eqref{eq:cint} become:
\begin{gather}
    \label{eq:cint-1d}
    \cint_0 u(x) = \sum_{i=1}^4 \nodal^0_i(u) \,\phi^0_i(x),
    \qquad \cint_1 v(x) = \sum_{j=1}^3 \nodal^1_j (v) \,\phi^1_j(x).
\end{gather}
The following lemma shows that $I_0$ and $I_1$ commute with the exterior derivative.

\begin{lemma}
\label{lemma:commuting-1d}
The following diagram commutes:
\begin{equation}
\label{eq:diagram}
\begin{tikzcd}[column sep=6pc]
C^1\Lambda^0(\interval) \arrow{r}{d} \arrow{d}{I} & 
  C^0\Lambda^1(\interval) \arrow{d}{I} \\
\P_3\Lambda^0(\interval) \arrow{r}{d} &
  \P_2\Lambda^1(\interval)
\end{tikzcd}
\end{equation} that is,
for every $u\in C^1\Lambda^0(\interval)$, there holds
\begin{gather}
    \label{eq:commute-interp-1d}
    d^0 \cint_0 u = \cint_1 d^0 u.
\end{gather}
\end{lemma}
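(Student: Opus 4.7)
My plan is to invoke Lemma~\ref{lemma:commute_nodal} with $k=0$, $\P\Lambda^0(\Omega) = \P_3\Lambda^0(\interval)$, and $\P\Lambda^1(\Omega) = \P_2\Lambda^1(\interval)$. The bulk of the argument reduces to checking the two hypotheses \eqref{eq:commute-basis} and \eqref{eq:commute-nodal} against the explicit bases \eqref{eq:basis-p3}, \eqref{eq:basis-p2} and node functionals \eqref{eq:i0-1d}, \eqref{eq:i1-1d}.

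First I would identify the integer $r$ from the statement of Lemma~\ref{lemma:commute_nodal}. We have $\dim\P_3\Lambda^0(\interval) = 4$ and $\dim\P_2\Lambda^1(\interval) = 3$, and the kernel of $d^0$ on $\P_3\Lambda^0(\interval)$ is the one-dimensional space of constants. Hence $r = 4 - 1 = 3$, which incidentally coincides with $\dim\P_2\Lambda^1(\interval)$, so the ``remaining'' node functional condition in \eqref{eq:commute-nodal} is vacuous here, and only the ``remaining'' basis condition in \eqref{eq:commute-basis} is nontrivial (it concerns the single extra generator $\phi^0_4$).

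Next I would verify \eqref{eq:commute-basis} by direct differentiation of the basis \eqref{eq:basis-p3}: a straightforward computation gives $(\phi^0_i)' = \phi^1_i$ for $i=1,2,3$ when compared against \eqref{eq:basis-p2}, while $\phi^0_4 \equiv 1/2$ yields $d\phi^0_4 = 0$. This is precisely why the ``constant'' basis function was placed in the last slot.

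Finally I would verify \eqref{eq:commute-nodal} by evaluating $\nodal^1_i(du)$ for an arbitrary $u\in C^1\Lambda^0(\interval)$: writing $du = u'(x)\,dx$, the pointwise evaluations give $\nodal^1_1(du) = u'(0) = \nodal^0_1(u)$ and $\nodal^1_2(du) = u'(1) = \nodal^0_2(u)$, while the fundamental theorem of calculus yields $\nodal^1_3(du) = \int_{\interval} u'(x)\,dx = u(1) - u(0) = \nodal^0_3(u)$. This matches the three required identities for $i=1,2,3$. With both hypotheses of Lemma~\ref{lemma:commute_nodal} verified, the commutativity \eqref{eq:commute-interp-1d} follows immediately. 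There is no real obstacle: the node functionals in \eqref{eq:i0-1d} were custom-tailored (in particular, the choice of $\nodal^0_3$ as a difference rather than a point value) precisely to make this verification work.
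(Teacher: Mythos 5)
Your proposal is correct and follows essentially the same route as the paper: both reduce the claim to verifying the hypotheses \eqref{eq:commute-basis} and \eqref{eq:commute-nodal} of Lemma~\ref{lemma:commute_nodal}, check $d\phi^0_i = \phi^1_i$ for $i=1,2,3$ and $d\phi^0_4=0$ by direct computation, and confirm the node functional identities via point evaluation and the fundamental theorem of calculus. Your explicit identification of $r=3$ is a small addition the paper leaves implicit, but the argument is the same.
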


\begin{proof}
To prove the result, it is enough to verify that the assumptions of lemma~\ref{lemma:commute_nodal} are fulfilled.
Comparing the bases in~\eqref{eq:basis-p3} and~\eqref{eq:basis-p2}, we see by straight forward computation that~\eqref{eq:commute-basis} holds. 
In particular, $d\phi^0_4 \equiv 0$, such that $\phi^0_4$ spans the kernel of $d$. Moreover,
\begin{gather}
    \nodal^1_1(du) = du(0) = u'(0) = \nodal^0_1(u).
\end{gather}
The same argument yields for $\nodal^1_2(du) = \nodal^0_2(u)$.
Furthermore,
\begin{gather}
    \label{eq:n31=n30}
    \nodal^1_3(du) = \int_{\interval} du \dx= \int_{\interval} u' \dx
    = u(1) - u(0) = \nodal^0_3(u).
\end{gather}
\end{proof}

\begin{remark}
\label{rem:continuity}
The set of node functionals $\{\nodal^0_j\}_{j=1}^4$ doesn't contain the evaluation functionals in the end points of the interval $\interval$ individually. Hence it is not immediately obvious that the interpolation operator $\cint_0$ generates continuity. Nevertheless, a simple computation shows that $\cint_0u(x)=u(x)$ for $x=0,1$. 
\end{remark}

%%%%%%%%%%%%%%%%%%%%%%%%%%%%%%%%%%%%%%%%%%%%%%%%%%%%%%%%%%%%%%
\section{Quasi-interpolation operators}
\label{sec:quasi_interpolation}
%%%%%%%%%%%%%%%%%%%%%%%%%%%%%%%%%%%%%%%%%%%%%%%%%%%%%%%%%%%%%%

The node functionals introduced in Section~\ref{sec:1d_complex_I} require point values of first derivative of $u$. In this section we want to weaken this condition, by defining weighted node functionals, which yield quasi-interpolation operators on $L^2$.

%%%%%%%%%%%%%%%%%%%%%%%%%%%%%%%%%%%%%%%%%%%%%%%%%%%%%%%%%%%%%%
\subsection{Node functionals on perturbed intervals}
%%%%%%%%%%%%%%%%%%%%%%%%%%%%%%%%%%%%%%%%%%%%%%%%%%%%%%%%%%%%%%

We begin by introducing perturbations of the reference interval $\interval=[0,1]$.
Let 
$0<\rho\leq\frac{1}{3}$ be a fixed parameter and $\interval_\rho = [-\rho, 1+\rho]$. Choose $y_l,\,y_r\in\interval_\rho$ such that $y_l\in B_\rho(0)$ and $y_r\in B_\rho(1)$, where $B_\rho(x)$ denotes the interval of radius $\rho$ and center $x$.
The perturbed interval $\tilde\interval_{y_l,y_r} = [y_l, y_r]$ is defined as the image of the reference interval $\interval$ via a monotone, possibly non-linear mapping
\begin{equation}
    \label{eq:phi}
    \Phi_{y_l,y_r}: \interval\to \tilde\interval_{y_l,y_r}
\end{equation}
satisfying $\Phi_{y_l,y_r}^{\prime}\equiv 1$ on $B_\rho(0)$ and $B_\rho(1)$.
It turns out that the actual shape of $\Phi$ is not needed in the definition of the quasi-interpolation operators.
We transform the node functionals for $\P_3\Lambda^0$ in~\eqref{eq:i0-1d} as:
\begin{gather}
    \label{eq:node-tilde}
    \begin{aligned}
    \widetilde{\nodal^0_1} (u) &= u'(y_l) \qquad
    & \widetilde{\nodal^0_3} (u) &= u(y_r)-u(y_l)\\
    \widetilde{\nodal^0_2} (u) &= u'(y_r)
    & \widetilde{\nodal^0_4} (u) &= u(y_r)+u(y_l)
    \end{aligned}
\end{gather}
and those for $\P_2\Lambda^1$ in~\eqref{eq:i1-1d} as:
\begin{gather}
    \label{eq:node1-tilde}
        \begin{split}
            \widetilde{\nodal^1_1} (v) &= v(y_l)\\
            \widetilde{\nodal^1_2} (v) &= v(y_r)
        \end{split}
        \qquad
    \widetilde{\nodal^0_3}(v)
    = \int_{\tilde\interval_{y_l,y_r}} v(\tilde x)
    \dtx.
\end{gather}

By proceeding as in the proof of Lemma~\ref{lemma:commuting-1d}, we obtain the following result.
\begin{lemma}
\label{prop:dof_tilde_prop}
The transformed node functionals~\eqref{eq:node-tilde} and~\eqref{eq:node1-tilde} satisfy the following property:
\begin{align}
    \label{eq:dof_tilde_prop1}
    \widetilde{\nodal^1_i}(du)&=\widetilde{\nodal^0_i}(u), \quad i=1,2,3,
\end{align}
for all $u\in C^1\Lambda^0(\interval_\rho)$.
\end{lemma}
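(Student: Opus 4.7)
The plan is essentially to re-run the proof of Lemma~\ref{lemma:commuting-1d} verbatim, with the endpoints $0,1$ replaced by the perturbed endpoints $y_l,y_r$. Since the transformed node functionals in~\eqref{eq:node-tilde} and~\eqref{eq:node1-tilde} are defined directly on the perturbed interval $\tilde\interval_{y_l,y_r}$ (rather than by pull-back through $\Phi_{y_l,y_r}$), the mapping $\Phi_{y_l,y_r}$ plays no role in the verification, and no change of variables is required. The identity $du = u'\dx$ for a $0$-form $u\in C^1\Lambda^0(\interval_\rho)$ is the only analytic input.

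First I would handle the two endpoint functionals $i=1,2$. Evaluating $\widetilde{\nodal^1_1}(du) = (du)(y_l) = u'(y_l) = \widetilde{\nodal^0_1}(u)$ is immediate, and the same reasoning at $y_r$ gives the case $i=2$. Here one uses that $u\in C^1\Lambda^0(\interval_\rho)$ ensures $u'(y_l)$ and $u'(y_r)$ are defined, since $y_l,y_r\in\interval_\rho$.

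For $i=3$, which is the only step with any content beyond unfolding definitions, I would apply the fundamental theorem of calculus on $\tilde\interval_{y_l,y_r}=[y_l,y_r]$:
\begin{gather}
    \widetilde{\nodal^1_3}(du)
    = \int_{\tilde\interval_{y_l,y_r}} u'(\tilde x)\dtx
    = u(y_r)-u(y_l)
    = \widetilde{\nodal^0_3}(u),
\end{gather}
mirroring~\eqref{eq:n31=n30} exactly, with $0,1$ replaced by $y_l,y_r$.

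There is no real obstacle here; the lemma is a verification. The only place that could tempt one to over-engineer the argument is the presence of $\Phi_{y_l,y_r}$, but as noted above it does not enter the computation, and the regularity assumption $u\in C^1\Lambda^0(\interval_\rho)$ (rather than just $C^1\Lambda^0(\interval)$) is precisely what is needed so that $u'$ exists on a neighborhood of $y_l$ and $y_r$ regardless of their position within $B_\rho(0)$ and $B_\rho(1)$.
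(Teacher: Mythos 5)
Your proof is correct and is exactly what the paper intends: the paper's own proof consists of the single remark ``By proceeding as in the proof of Lemma~\ref{lemma:commuting-1d}'', and your verification --- evaluating $du=u'\dx$ at $y_l,y_r$ for $i=1,2$ and applying the fundamental theorem of calculus on $[y_l,y_r]$ for $i=3$ --- is precisely that argument with the endpoints $0,1$ replaced by $y_l,y_r$. Your observations that $\Phi_{y_l,y_r}$ plays no role and that the regularity on all of $\interval_\rho$ is what makes the endpoint evaluations well defined are both accurate.
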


%%%%%%%%%%%%%%%%%%%%%%%%%%%%%%%%%%%%%%%%%%%%%%%%%%%%%%%%%%%%%%
\subsection{Weighted node functionals}
%%%%%%%%%%%%%%%%%%%%%%%%%%%%%%%%%%%%%%%%%%%%%%%%%%%%%%%%%%%%%%

Let $\eta\in C^\infty(\mathbb R)$ be the standard mollifier
\begin{gather*}
    \eta(x):=\left\{\begin{array}{ll}
        C\, \exp\left(\frac{1}{|x|^2-1}\right), & \text{if }|x|<1\\
        0, &\text{if }|x|>1
    \end{array}\right.    
\end{gather*}
where the constant $C$ normalizes the integral to 1. Then,
the cut-off functions for the intervals
$B_l:=B_\rho(0)$ and $B_r:=B_\rho(1)$ are given by
\begin{gather*}
    \eta_l(x):=\frac{1}{\rho}\eta\left(\frac{x}{\rho}\right),
    \quad
    \eta_r(x):=\frac{1}{\rho}\eta\left(\frac{x-1}{\rho}\right).
\end{gather*}
Note that, due to normalization, it holds
\begin{gather}
    \label{eq:normalization}
    \int_{\mathbb R} \eta_l(x) \,dx = \int_{\mathbb R} \eta_r(x) \,dx = 1,
    \qquad \norm{\eta_l}{L^2(\mathbb R)}
    = \norm{\eta_r}{L^2(\mathbb R)}
    = \frac{\norm{\eta}{L^2(\mathbb R)}}{\sqrt\rho}.
\end{gather}

We introduce the weighted node functionals $\overline{\nodal^k_i}\in(C^{1-k}\Lambda^k(\interval_\rho))^*$ as follows:
\begin{gather}
    \label{eq:pi0-1d}
    \overline{\nodal^k_i}(u) = \iint_{B_l\, B_r} \eta_{l}(\xi_l)\eta_{r}(\xi_r)
    \widetilde{\nodal^k_i}(u) \, d\xi_r \, d\xi_l, 
\end{gather}
for $k=0,1$ and all admissible values of $i$.

\begin{remark}
The normalization entails, for instance,
\begin{gather*}
    \overline{\nodal^0_1}(u)
    =\int_{B_l} \eta_{l}(\xi)
    \widetilde{\nodal^0_1}(u)  \, d\xi.
\end{gather*}
Thus, the weighted node functionals $ \overline{\nodal^0_1}(u)$, $\overline{\nodal^0_2}(u)$,  $\overline{\nodal^1_1}(v)$, and $\overline{\nodal^1_2}(v)$ are characterized by the transformations of only a single end point.
Nevertheless, the remaining node functionals $\overline{\nodal^0_3}(u)$, $\overline{\nodal^0_4}(u)$, and $\overline{\nodal^1_3}(v)$ are truly double integrals on the balls around both end points.
\end{remark}

%%%%%%%%%%%%%%%%%%%%%%%%%%%%%%%%%%%%%%%%%%%%%%%%%%%%%%%%%%%%%%
\subsection{Quasi-interpolation operators}
\label{sec:quasi-1d}
%%%%%%%%%%%%%%%%%%%%%%%%%%%%%%%%%%%%%%%%%%%%%%%%%%%%%%%%%%%%%%

We define now the quasi-interpolation operators.
\begin{definition}
Let $\{\overline{\nodal^0_i}\}$, $\{\overline{\nodal^1_i}\}$ be the weighted node functionals as in~\eqref{eq:pi0-1d}, and $\{\phi^{0}_i\}$, $\{\phi^{1}_i\}$ be the basis functions on the reference element $\interval$ as in~\eqref{eq:basis-p2} and~\eqref{eq:basis-p3}. The quasi-interpolation operators on $\interval_\rho$ are defined as:
\begin{gather}
    \label{eq:quasi-interp}
    \begin{aligned}
    \wint_0\colon C^1\Lambda^0(\interval_\rho) &\to \P_3\Lambda^0(\interval)
    \qquad&u &\mapsto \sum_{i=1}^4 \overline{\nodal^0_i}(u) \,\phi^0_i,
    \\
    \wint_1 \colon C^0\Lambda^1(\interval_\rho) &\to \P_2\Lambda^1(\interval)
    &v &\mapsto \sum_{i=1}^3 \overline{\nodal^1_i}(v) \,\phi^1_i,
    \end{aligned}
\end{gather}
Furthermore, for later convenience, we extend both operators such that
\begin{gather}
\label{eq:wint_extension}
    \wint_0\colon C^0\Lambda^1(\interval_\rho) \to 0,
        \qquad
        \wint_1\colon C^1\Lambda^0(\interval_\rho) \to 0.
\end{gather}
\end{definition}

We show now that the quasi-interpolation operators $\wint_0$ and $\wint_1$ are well-defined and bounded on $L^2(\interval_\rho)$.
\begin{theorem}
\label{thm:L2-bound}
    The quasi-interpolation operators admit the following estimates:
    \begin{align*}
        \norm{\wint_0 u}{L^2\Lambda^0(\interval_\rho)}
            \leq C_{\wint_0} \norm{u}{L^2\Lambda^0(\interval_\rho)},
            \quad\forall u\in L^2\Lambda^0(\interval_\rho),\\
        \norm{\wint_1 v}{L^2\Lambda^1(\interval_\rho)}
            \leq C_{\wint_1} \norm{v}{L^2\Lambda^1(\interval_\rho)},
            \quad\forall v\in L^2\Lambda^1(\interval_\rho),   
    \end{align*}
where $C_{\wint_0},\ C_{\wint_1}$ are positive constants depending on $\rho$, but independent of $u$ and $v$.
\end{theorem}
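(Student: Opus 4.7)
The plan is to reduce the theorem to showing that each weighted node functional $\overline{\nodal^j_i}$ admits an estimate of the form $|\overline{\nodal^j_i}(u)| \leq C(\rho) \norm{u}{L^2\Lambda^j(\interval_\rho)}$. Once this is done, writing $\wint_j u = \sum_i \overline{\nodal^j_i}(u)\phi^j_i$ and using the triangle inequality together with the fact that the basis functions $\phi^j_i$ have fixed, finite $L^2(\interval) \subseteq L^2(\interval_\rho)$-norms yields the desired bounds with constants $C_{\wint_j}$ depending on $\rho$ (through $C(\rho)$) but independent of the argument. So the real work lies in bounding the seven weighted functionals.

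For the four functionals that only involve point values of $u$ or $v$, namely $\overline{\nodal^0_3},\overline{\nodal^0_4},\overline{\nodal^1_1},\overline{\nodal^1_2}$, I would use Fubini to rewrite them as $L^2$-pairings with a weight built from $\eta_l$ or $\eta_r$. For instance, since $\int\eta_l = \int\eta_r = 1$,
\begin{gather*}
\overline{\nodal^0_3}(u) = \int_{B_r}\eta_r(\xi)u(\xi)\,d\xi - \int_{B_l}\eta_l(\xi)u(\xi)\,d\xi,
\end{gather*}
and an analogous expression (with a plus sign) for $\overline{\nodal^0_4}$; both are bounded by Cauchy--Schwarz using $\norm{\eta_{l,r}}{L^2} = \rho^{-1/2}\norm{\eta}{L^2(\R)}$ from~\eqref{eq:normalization}. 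The same estimate handles $\overline{\nodal^1_1}$ and $\overline{\nodal^1_2}$ directly.

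The functional $\overline{\nodal^1_3}(v) = \iint \eta_l(\xi_l)\eta_r(\xi_r)\int_{\xi_l}^{\xi_r} v(\tilde x)\,d\tilde x\,d\xi_r\,d\xi_l$ I would rewrite via Fubini as $\int_{\interval_\rho} v(\tilde x) g(\tilde x)\,d\tilde x$, where $g(\tilde x) = \int_{-\rho}^{\tilde x}\eta_l\int_{\tilde x}^{1+\rho}\eta_r$ is bounded pointwise by $1$; then Cauchy--Schwarz gives $|\overline{\nodal^1_3}(v)| \leq \sqrt{1+2\rho}\,\norm{v}{L^2\Lambda^1(\interval_\rho)}$.

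The main obstacle, and the only step beyond routine Cauchy--Schwarz, is the pair $\overline{\nodal^0_1},\overline{\nodal^0_2}$, which a priori require $u' \in C^0$. The key observation is that $\eta_l$ has compact support strictly inside $B_l$ (so $\eta_l$ vanishes at $\pm\rho$), and likewise for $\eta_r$; hence integration by parts yields
\begin{gather*}
\overline{\nodal^0_1}(u) = \int_{B_l}\eta_l(\xi)\,u'(\xi)\,d\xi = -\int_{B_l}\eta_l'(\xi)\,u(\xi)\,d\xi,
\end{gather*}
and symmetrically for $\overline{\nodal^0_2}$. This moves the derivative onto the smooth mollifier, producing an $L^2$-pairing with weight $\eta_l'$ whose $L^2$-norm scales like $\rho^{-3/2}$. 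By density of $C^1\Lambda^0(\interval_\rho)$ in $L^2\Lambda^0(\interval_\rho)$, this representation extends $\overline{\nodal^0_1}$ to all of $L^2$, and Cauchy--Schwarz supplies the required bound. Combining all seven estimates with $\norm{\phi^j_i}{L^2(\interval)}$ and summing through~\eqref{eq:quasi-interp} completes the proof, yielding explicit constants $C_{\wint_0}$ and $C_{\wint_1}$ that blow up like $\rho^{-3/2}$ and $\rho^{-1/2}$ respectively as $\rho\to 0$.
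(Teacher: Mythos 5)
Your proposal is correct and follows essentially the same route as the paper: reduce to $L^2$-boundedness of the seven weighted node functionals, handle the point-value functionals by Cauchy--Schwarz with $\norm{\eta_{l,r}}{L^2}=\rho^{-1/2}\norm{\eta}{L^2(\R)}$, bound $\overline{\nodal^1_3}$ via Fubini (the paper phrases this through the $L^1$-norm of $v$, you through an explicit weight $g$ with $|g|\le 1$ --- same constant $\sqrt{1+2\rho}$), and crucially integrate by parts against the compactly supported mollifier to remove the derivative in $\overline{\nodal^0_1},\overline{\nodal^0_2}$, yielding the $\rho^{-3/2}$ factor. Your explicit remark on extending the derivative functionals to all of $L^2$ by density is a small point the paper leaves implicit, but it does not change the argument.
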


\begin{proof}
It is enough to show that the weighted node functionals $\overline{\nodal^k_i}$, for $k=0,1$, and for all admissible values of $i$,
are bounded on $L^2$.
First, for $u\in L^2(\Omega)$
\begin{align}
    \left|\overline{\nodal^1_1} v\right|
    & = \abs{\int_{B_l} \int_{B_r} \eta_l(\xi_l)\eta_r(\xi_r) v(\xi_l)\,d\xi_r\,d\xi_l}
    = \abs{\int_{B_l} \eta_l(\xi_l)v(\xi_l)\,d\xi_l}\\
    & \leq \norm{\eta_l}{L^2(B_l)} \norm{v}{L^2(B_l)}
    \leq \rho^{-\nicefrac12}\norm{\eta}{L^2(\mathbb R)} \norm{v}{L^2(\interval_\rho)}.
\end{align}
This argument immediately transfer to $\overline{\nodal^1_2}$. For  $\overline{\nodal^0_3}$ (and with appropriate modification for $\overline{\nodal^0_4}$) we obtain by the same means
\begin{align}
    \left|\overline{\nodal^0_3} u\right|
    & = \abs{\int_{B_l} \int_{B_r} \eta_l(\xi_l)\eta_r(\xi_r)
    \bigl(u(\xi_r) - u(\xi_l)\bigr)\,d\xi_r\,d\xi_l}\\
    &= \abs{\int_{B_r} \eta_r(\xi_r)u(\xi_r)\,d\xi_r
    - \int_{B_l} \eta_l(\xi_l)u(\xi_l)\,d\xi_l}\\
    & 
    \leq 2\, \rho^{-\nicefrac12}\norm{\eta}{L^2(\mathbb R)} \norm{u}{L^2(\interval_\rho)}.
\end{align}
For the integral node value we observe
\begin{align*}
    \abs{\overline{\nodal^1_3} v} 
    & = \abs{\int_{B_l} \int_{B_r} \int_{\tilde\interval} \eta_l(\xi_l)\eta_r(\xi_r) 
    v(\tilde x)\dtx\,d\xi_r\,d\xi_l}\\
    & \le \int_{B_l} \int_{B_r} \int_{\tilde\interval} \eta_l(\xi_l)\eta_r(\xi_r)
    \abs{v(\tilde x)}\dtx\,d\xi_r\,d\xi_l\\
    & \le \norm{v}{L^1(\interval_\rho)}
    \int_{B_l} \int_{B_r} \eta_l(\xi_l)\eta_r(\xi_r)\,d\xi_r\,d\xi_l\\
    &\le \sqrt{1+2\rho} \norm{v}{L^2(\interval_\rho)}.
\end{align*}
Finally, we estimate the degrees of freedom involving derivatives using integration by parts, for instance
\begin{align*}
    \abs{\overline{\nodal^0_1} u}
    & = \abs{\int_{B_l}\int_{B_r}\eta_l(\xi_l)\,\eta_r(\xi_r)u'(\xi_l)\,d\xi_r\,d\xi_l}
    = \abs{\int_{B_l}\eta_l(\xi_l)u'(\xi_l)\,d\xi_l}\\
    & = \abs{\int_{B_l}\eta_l'(\xi_l) u(\xi_l)\,d\xi_l}
    \leq \rho^{-\nicefrac32}\norm{\eta'}{L^2(\mathbb R)} \norm{u}{L^2(\interval_\rho)}.
\end{align*}
The statement is then proved, with 

\begin{align*}
    C_{\wint_0} &=
    \rho^{-3/2} \norm{\eta'}{L^2(\R)}
    (\norm{\phi^0_1}{L^2(\interval_\rho)}+\norm{\phi^0_2}{L^2(\interval_\rho)})\\
    &\quad
    + 2\rho^{-1/2}\norm{\eta}{L^2(\R)}
    (\norm{\phi^0_3}{L^2(\interval_\rho)}+\norm{\phi^0_4}{L^2(\interval_\rho)}),\\
    C_{\wint_1} & =
    \rho^{-1/2}\norm{\eta}{L^2(\R)}
    (\norm{\phi^1_1}{L^2(\interval_\rho)}+\norm{\phi^1_2}{L^2(\interval_\rho)})\\
    &\quad + \sqrt{1+2\rho}\norm{\phi^1_3}{L^2(\interval_\rho)}.
\end{align*}
\end{proof}

The following lemma shows that the quasi-interpolation operators $\wint_0$ and $\wint_1$ are co-chain operators.
\begin{lemma}
\label{lem:commuting-quasi-interp}
The exterior derivative and the quasi-interpolation operators in~\eqref{eq:quasi-interp} commute, namely, for all $u\in H\Lambda^0(\interval_\rho)$ there holds
\begin{gather}
    d^0 \wint_0 u = \wint_1 d^0 u.
\end{gather}
\end{lemma}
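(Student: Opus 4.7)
The plan is to invoke Lemma~\ref{lemma:commute_nodal} with the reference bases $\{\phi^0_i\}_{i=1}^4$ and $\{\phi^1_j\}_{j=1}^3$ from~\eqref{eq:basis-p3} and~\eqref{eq:basis-p2}, together with the weighted node functionals $\{\overline{\nodal^0_i}\}$ and $\{\overline{\nodal^1_j}\}$ in place of their unperturbed counterparts. The basis compatibility condition~\eqref{eq:commute-basis} (with $r=3$ and $d\phi^0_4\equiv 0$) was verified in the proof of Lemma~\ref{lemma:commuting-1d} and is unaffected by the perturbation, since $\wint_0$ and $\wint_1$ expand in the same reference polynomials as $\cint_0$ and $\cint_1$. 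It thus suffices to establish the nodal identity
\begin{equation*}
\overline{\nodal^1_i}(du) = \overline{\nodal^0_i}(u), \qquad i=1,2,3.
\end{equation*}

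For $u \in C^1\Lambda^0(\interval_\rho)$ this is immediate from Lemma~\ref{prop:dof_tilde_prop}: the pointwise relation $\widetilde{\nodal^1_i}(du) = \widetilde{\nodal^0_i}(u)$ holds for every admissible pair $(y_l,y_r)$, and integrating it against $\eta_l(\xi_l)\eta_r(\xi_r)$ over $B_l\times B_r$ and applying Fubini reproduces the weighted version via definition~\eqref{eq:pi0-1d}. In the smooth setting, Lemma~\ref{lemma:commute_nodal} then yields $d^0 \wint_0 u = \wint_1 d^0 u$.

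To reach a general $u \in H\Lambda^0(\interval_\rho)$, I invoke density of $C^\infty(\overline{\interval_\rho})$ in $H^1(\interval_\rho)$. Taking $u_n \to u$ in $H^1$, the continuity bounds of Theorem~\ref{thm:L2-bound} give $\wint_0 u_n \to \wint_0 u$ and $\wint_1 du_n \to \wint_1 du$ in $L^2$. Because both images live in finite-dimensional polynomial spaces, convergence is equivalent in every norm and in particular commutes with $d^0$, so passing to the limit in $d^0 \wint_0 u_n = \wint_1 d^0 u_n$ yields the conclusion. The only delicate point is that $\overline{\nodal^0_1}$ and $\overline{\nodal^0_2}$ nominally involve pointwise values of $u'$ which need not exist for $u \in H^1$; however, the integration-by-parts rewriting used in the proof of Theorem~\ref{thm:L2-bound} transfers the derivative from $u$ onto $\eta_l$, respectively $\eta_r$, providing the unique continuous $L^2$-extension on which the density argument rests. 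I expect this regularity/definition matter to be the main item to handle carefully.
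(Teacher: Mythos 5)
Your proposal is correct and takes essentially the same route as the paper: establish the weighted nodal identities $\overline{\nodal^1_i}(du)=\overline{\nodal^0_i}(u)$ from Lemma~\ref{prop:dof_tilde_prop} and invoke Lemma~\ref{lemma:commute_nodal}, the reference bases being unchanged from the canonical interpolation. The paper simply asserts the identity directly for $u\in H\Lambda^0(\interval_\rho)$ (legitimately, since in one dimension $u'\in L^2$ makes the weighted functionals well-defined there and the first two identities are tautological as integrals), whereas you add a density passage from $C^1$ to $H^1$; that extra step is sound, just not strictly necessary.
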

\begin{proof}
From lemma~\ref{prop:dof_tilde_prop} it follows immediately that for all 
$u\in H\Lambda^0(\interval_\rho)$
\begin{align}
    \label{eq:nodal_prop1}
    \overline{\nodal^1_i}(du)&=\overline{\nodal^0_i}(u), \qquad i=1,2,3.
\end{align}
Thus, the assumptions of lemma~\ref{lemma:commute_nodal} for $\wint_0$ and $\wint_1$ are fulfilled. Since the basis functions are the same as for the canonical interpolation operator $I$, the result follows.
\end{proof}

We note that $\wint_0$ and $\wint_1$ are not interpolation operators in the classical sense. In particular, they do not act as the identity on their range. They share this with the classical quasi-interpolation operators in~\cite{Clement75,ErnGuermond,ScottZhang90} as well as the commuting ones in~\cite{Christiansen07,Schoeberl}. 
By using a trick from~\cite{Schoeberl10}, we can define new operators $\hat\wint_0$ and $\hat\wint_1$ which have the projection property.

To this end, let us first highlight the dependence of $\wint_k$ on the size of the intervals $B_\rho$ by writing $\wint_k(\rho)$ for $k=0,1$. By the definition of the node functionals, we have for any polynomial $p\in\P\Lambda^k$
\begin{gather}
    \wint_k(\rho) p \to \cint_k p \qquad\text{as}\qquad\rho\to 0.
\end{gather}
 Since $\cint_k$ acts as identity on $\P\Lambda^k$ the operator $\wint_k$ is invertible on this space for sufficiently small $\rho$. Then, we can set
 \begin{gather}
     \hat\wint_k = \bigl(\wint_{k|\P\Lambda^k}\bigr)^{-1} \wint_k.
 \end{gather}

%%%%%%%%%%%%%%%%%%%%%%%%%%%%%%%%%%%%%%%%%%%%%%%%%%%%%%%%%%%
\section{Tensor complex in $n$ dimensions}
\label{sec:tensor_complex}
%%%%%%%%%%%%%%%%%%%%%%%%%%%%%%%%%%%%%%%%%%%%%%%%%%%%%%%%%%%

We start the section with a brief introduction to the tensor product of cochain complexes. Then, we detail two particular cases: the tensorization of the $L^2$ de Rahm complex~\eqref{eq:complex_H} on $\interval_\rho$, and the tensorization of the finite element complex on $\interval$ introduced in Section~\ref{sec:1d_complex_I}.

%%%%%%%%%%%%%%%%%%%%%%%%%%%%%%%%%%%%%%%%%%%%%%%%%%%%%%%%%%%
\subsection{Introduction to the tensor product of cochain complexes}
%%%%%%%%%%%%%%%%%%%%%%%%%%%%%%%%%%%%%%%%%%%%%%%%%%%%%%%%%%%

Let $S\subset\R^n$ and $T\subset\R^m$ be two open bounded  domains, and let $V\subset H\Lambda^k(S)$ and $W\subset H\Lambda^\ell(T)$ be Hilbert spaces of differential forms, with inner products $\dual{\cdot}{\cdot}{H\Lambda^k}$ and $\dual{\cdot}{\cdot}{H\Lambda^\ell}$, respectively (see~\eqref{eq:H_inner_prod}).
Following~\cite[Chapter 2]{Reed1980} we recall the definition of the tensor product $V\otimes W$.

Given two differential forms $v\in V$ and $w\in W$, with 
\begin{equation*}
v=\sum_{\sigma\in\Sigma(k,n)}v_\sigma dx^\sigma,\quad
w=\sum_{\tau\in\Sigma(\ell,m)}w_\tau dx^\tau,
\end{equation*}
their tensor product $v\otimes w$ is the $(k+\ell)$-form, expressed in coordinates as
\begin{equation}
\label{eq:tp_forms}
    v\otimes w=\sum_{\substack{\sigma\in\Sigma(k,n)\\\tau\in\Sigma(\ell,m)}} v_\sigma\otimes w_\tau\, dx^\sigma\wedge dx^\tau.
\end{equation}

Denote with $\mathcal{E}$ the set of finite linear combinations of tensor product differential forms as in~\eqref{eq:tp_forms}. We define an inner product on $\mathcal{E}$ as 
\begin{equation}
    \label{eq:tp_inner_pr}
    \dual{v_1\otimes w_1}{v_2\otimes w_2}{H\Lambda^k\otimes H\Lambda^\ell}
    =\dual{v_1}{v_2}{H\Lambda^k} \dual{w_1}{w_2}{H\Lambda^\ell},
\end{equation}
and we extend it by linearity to $\mathcal{E}$.
The tensor product space $V\otimes W$ is the Hilbert space obtained as completition of $\mathcal{E}$ under the inner product~\eqref{eq:tp_inner_pr}. 
If $\{\zeta_i\}$ and $\{\xi_j\}$ are orthonormal bases of the Hilbert spaces $V$ and $W$, respectively, then the set $\{\zeta_i\otimes\xi_j\}$ is an othonormal basis for $V\otimes W$, which we refer to as rank-one basis since it consists of elements of tensor rank one.

Following~\cite{Arnold2015}, we recall the definition of tensor product of complexes of differential forms.
Let there be given two complexes on $S\subset\R^n$ and $T\subset\R^m$
\begin{align}
\arraycolsep1pt
\label{eq:V_complex}
\begin{array}{ccccccccccccc}
    0&\xrightarrow{\;\;\subset\;\;}&\R
    &\xrightarrow{\;\;\subset\;\;}& V^0
    &\xrightarrow{\;\;d\;\;}&V^1
    &\xrightarrow{\;\;d\;\;}&\cdots
    &\xrightarrow{\;\;d\;\;}& V^n
    &\xrightarrow{\;\;d\;\;}&0\\
    0&\xrightarrow{\;\;\subset\;\;}&\R
    &\xrightarrow{\;\;\subset\;\;}& W^0
    &\xrightarrow{\;\;d\;\;}&W^1
    &\xrightarrow{\;\;d\;\;}&\cdots
    &\xrightarrow{\;\;d\;\;}& W^m
    &\xrightarrow{\;\;d\;\;}&0
  \end{array}
\end{align}
which are subcomplexes of the $L^2$ de Rham complex of $S\subset\R^n$ and $T\subset\R^m$, respectively, meaning that $V^k\subset H\Lambda^k(S)$ and $d(V^k)\subset V^{k+1}$ for all $k=0,\ldots,n$, and $W^j\subset H\Lambda^j(T)$ and $d(W^j)\subset W^{j+1}$, for all $j=0,\ldots,m$. The tensor product of the two complexes in~\eqref{eq:V_complex} is the complex
\begin{equation}
    \label{eq:tp_complexes}
    0\xrightarrow{\;\;\subset\;\;}\R
    \xrightarrow{\;\;\subset\;\;}(V\otimes W)^0
    \xrightarrow{\;\;d\;\;}(V\otimes W)^1
    \xrightarrow{\;\;d\;\;}\cdots
    \xrightarrow{\;\;d\;\;}(V\otimes W)^{m+n}
    \xrightarrow{\;\;d\;\;}0,
\end{equation}
where the space $(V\otimes W)^k$ is defined as
\begin{equation}
    \label{eq:tp_spaces}
    (V\otimes W)^k:=\bigoplus_{i+j=k} \left(V^i\otimes W^j\right),\quad k=0,\ldots,m+n,
\end{equation}
and the exterior derivative $d:(V\otimes W)^k\rightarrow (V\otimes W)^{k+1}$ is defined as
\begin{gather}
    \label{eq:d-tensor}
    d^k(u\otimes v) = d^i u\otimes v + (-1)^i u \otimes d^j v,
    \quad u\in V^i,\ v\in W^j,\ i+j=k.
\end{gather}
Note that the complex~\eqref{eq:tp_complexes} is a subcomplex of the de Rham complex on the Cartesian product $S\times T$.
This construction generalizes to the tensor product of any finite number of subcomplexes of the $L^2$ de Rham complex.

%%%%%%%%%%%%%%%%%%%%%%%%%%%%%%%%%%%%%%%%%%%%%%%%%%%%%%%%%%%
\subsection{Tensorization of the $L^2$ de Rham complex}
%%%%%%%%%%%%%%%%%%%%%%%%%%%%%%%%%%%%%%%%%%%%%%%%%%%%%%%%%%%

We detail now the particular case where $S=T=\interval_\rho\subset\R$, and the complexes in~\eqref{eq:V_complex} coincide with the $L^2$ de Rham complex on $\interval_\rho$  (see~\eqref{eq:complex_H})
\begin{equation}
    \label{eq:complex_H_I}
    0\xrightarrow{\;\;\subset\;\;}
    \R\xrightarrow{\;\;\subset\;\;} H\Lambda^0(\interval_\rho)
    \xrightarrow{\;\; d\;\;} H\Lambda^1(\interval_\rho)
    \xrightarrow{\;\; d\;\;} 0.
\end{equation}
The tensorization of the complex~\eqref{eq:complex_H_I} with itself gives the following complex on the square $\interval_\rho\times\interval_\rho$
\begin{equation}
    \label{eq:complex_H_tens2}
    0\xrightarrow{\;\;\subset\;\;}\R
    \xrightarrow{\;\;\subset\;\;}\complexH{0}
    \xrightarrow{\;\;d\;\;}\complexH{1}
    \xrightarrow{\;\;d\;\;}\complexH{2}
    \xrightarrow{\;\;d\;\;}0,
\end{equation}
where, in accordance with~\eqref{eq:tp_spaces}, we have
\begin{equation}
    \label{eq:H_tens2}
    \complexH{k}:=\bigoplus_{\substack{i+j=k\\i,j=0,1}} H\Lambda^i\otimes H\Lambda^j,
    \quad k=0,1,2.
\end{equation}
In particular, we have
\begin{equation}
\begin{aligned}
    \complexH{0} & = H\Lambda^0 \otimes H\Lambda^0,\\
    \complexH{1} & = \left(H\Lambda^0 \otimes H\Lambda^1\right)\oplus\left(H\Lambda^1 \otimes H\Lambda^0\right),\\
    \complexH{2} & = H\Lambda^1 \otimes H\Lambda^1.
\end{aligned}
\end{equation}
The exterior derivative $d:\complexH{k}\rightarrow\complexH{k+1}$ is defined in equation~\eqref{eq:d-tensor}. In particular, it holds
\begin{equation}
\label{eq:d_tens2}
\begin{aligned}
    d^0 (u_0\otimes v_0) &= d^0 u_0\otimes v_0 + u_0\otimes d^0 v_0,\quad& \forall\, u_0\otimes v_0&\in H\Lambda^0\otimes H\Lambda^0,\\
    d^1 (u_0\otimes v_1) &= d^0 u_0\otimes v,
    & \forall\, u_0\otimes v_1&\in H\Lambda^0\otimes H\Lambda^1,\\
    d^1 (u_1\otimes v_0) &= u_1\otimes d^0 v_0,
    & \forall\, u_1\otimes v_0&\in H\Lambda^1\otimes H\Lambda^0.
\end{aligned}
\end{equation}

Formula~\eqref{eq:complex_H_tens2} and~\eqref{eq:H_tens2} generalize to the $n$-fold tensor product, leading to the following complex on the $n$-dimensional hypercube $\interval_\rho^{\times n}$
\begin{equation}
    \label{eq:complex_H_tensn}
    0\xrightarrow{\;\;\subset\;\;}\R
    \xrightarrow{\;\;d\;\;}\complexHtens{n}{0}
    \xrightarrow{\;\;d\;\;}\complexHtens{n}{1}
    \xrightarrow{\;\;d\;\;}\cdots
    \xrightarrow{\;\;d\;\;}\complexHtens{n}{n}
    \xrightarrow{\;\;d\;\;}0.
\end{equation}
The space $\complexHtens{n}{k}$, for $k=0,\ldots,n$, is defined as
\begin{equation}
    \label{eq:H_tensk}
    \complexHtens{n}{k}:=\bigoplus_{\mathbf{i}\in\chi_k} H\Lambda^{i_1}\otimes\cdots\otimes H\Lambda^{i_n}.
\end{equation}
Here we employ an alternative representation of $\Sigma(k,n)$ by characteristic vectors,
where the binary vector $\mathbf i$ selects $k$ out of the $n$ fibers, and is thus taken from the set
\begin{gather}
    \label{eq:chi}
    \chi_k:=\left\{\boldsymbol{\ell}=(\ell_1,\ldots,\ell_n)\in\{0,1\}^n
    \;\middle|\;\sum_{j=1}^n\ell_j=k\right\}.
\end{gather}
Note that the tensor product space $\complexHtens{n}{k}$ is a proper, dense subspace of the space $H\Lambda^k(\interval_\rho^{\times n})$. For $H^1 = H\Lambda^0$, see~\cite[Section 3.4.2]{Hackbusch14}, for the other spaces, note that $\complextens{C^\infty}{n}{k}$ is dense in $\complexHtens{n}{k}$ as well as in $H\Lambda^k(\interval_\rho^{\times n})$.

By straightforward computations, and making use of~\eqref{eq:d-tensor}, we derive the following formula for the exterior derivative $d:\complexHtens{n}{k}\rightarrow\complexHtens{n}{k+1}$:
\begin{equation}
\label{eq:d-tensor-k}
    d^k(u_1\otimes \cdots\otimes u_n)= 
    \sum_{j=1}^n \theta_j\,
    (u_1\otimes\cdots\otimes d u_j\otimes\cdots\otimes u_n),
\end{equation}
where $\theta_j\in\{-1,1\}$ is defined as $\theta_j:=(-1)^{\sum_{\ell=1}^{j-1} i_\ell}$.

%%%%%%%%%%%%%%%%%%%%%%%%%%%%%%%%%%%%%%%%%%%%%%%%%%%%%%%%%%%
\subsection{Tensorization of the finite element complex}
%%%%%%%%%%%%%%%%%%%%%%%%%%%%%%%%%%%%%%%%%%%%%%%%%%%%%%%%%%%

We focus now on the finite element complex 
\begin{equation}
    \label{eq:1d_complexP}
    0\xrightarrow{\;\;\subset\;\;}\R
    \xrightarrow{\;\;\subset\;\;}\P_3\Lambda^0(\interval) \xrightarrow{\;\;d\;\;}
    \P_2\Lambda^1(\interval)
    \xrightarrow{\;\;d\;\;}0
\end{equation}
with node functionals as in~\eqref{eq:pi0-1d}.
We write the generic $\P\Lambda$ to refer to either space of polynomial forms, with the understanding that $\P\Lambda^0 \equiv \P_3\Lambda^0$ and $\P\Lambda^1 \equiv \P_2\Lambda^1$.

Applying the tensor product construction to~\eqref{eq:1d_complexP}, we find the following tensor product complex on $\interval^{\times n}$
\begin{gather}
\label{eq:complex_P_tens}
    0 \xrightarrow{\;\;\subset\;\;}\R 
    \xrightarrow{\;\;\subset\;\;}\complexPtens{n}{0} \xrightarrow{\;\;d\;\;} \complexPtens{n}{1} \xrightarrow{\; d\;}\cdots \complexPtens{n}{n} \xrightarrow{\;\;d\;\;} 0,
\end{gather}
where the space $\complexPtens{n}{k}$, for $k=0,\ldots,n$, is defined as
\begin{gather}
    \label{eq:Ptens}
    \complexPtens{n}{k} 
    = \bigoplus_{\mathbf i\in\chi_k}
    \P\Lambda^{i_1}\otimes\cdots\otimes\P\Lambda^{i_n},
\end{gather}
the set $\chi_k$ being introduced in~\eqref{eq:chi},
and the exterior derivative $d:\complexPtens{n}{k}\rightarrow\complexPtens{n}{k+1}$ is as in~\eqref{eq:d-tensor-k}.

The tensor product construction yields node functionals
for $\complexPtens{n}{k}$ of the form $\nodal_1\otimes\cdots\otimes\nodal_n$, and is associated with the Cartesian product $f_1\times\cdots\times f_n$, where $\nodal_j$ is the weighted node functional as in~\eqref{eq:pi0-1d} associated to $f_j$ ($f_j$ is either a vertex of $\interval_\rho$ or $\interval_\rho$ itself). Then, \eqref{eq:complex_P_tens} is a complex of finite element differential forms.

%%%%%%%%%%%%%%%%%%%%%%%%%%%%%%%%%%%%%%%%%%%%%%%%%%%%%%%%
\section{Tensor product of quasi-interpolation operators}
\label{sec:tensor-quasi-interpolation}
%%%%%%%%%%%%%%%%%%%%%%%%%%%%%%%%%%%%%%%%%%%%%%%%%%%%%%%%

%%%%%%%%%%%%%%%%%%%%%%%%%%%%%%%%%%%%%%%%%%%%%%%%%%%%%%%%
\subsection{Introduction to the tensor product of operators on Hilbert spaces}
%%%%%%%%%%%%%%%%%%%%%%%%%%%%%%%%%%%%%%%%%%%%%%%%%%%%%%%%

We start recalling the definition and some properties of the tensor product of operators on Hilbert spaces (see~\cite{Reed1980}).
\begin{definition}
\label{def:tp_operators}
Let $V,\,W$ be two Hilbert spaces, and let $F\colon V\rightarrow V^\prime$ and $G\colon W\rightarrow W^\prime$ be continuous operators. The tensor product operator $F\otimes G\colon V\otimes W\rightarrow V^\prime\otimes W^\prime$ is defined on functions of the type $v\otimes w$ as
\begin{equation*}
    (F\otimes G)(v\otimes w)=F(v) \otimes G(w),
\end{equation*}
and is then extended by linearity and density. 
\end{definition}
In~\cite[Chapter 8]{Reed1980} the authors prove the following result.
\begin{lemma}
\label{lem:tp_bounded_op}
Let the spaces $V,\, W$ and the operators $F,\, G$ be as in Definition~\ref{def:tp_operators}. Then, the tensor product operator $F\otimes G$ is bounded. In particular, it holds
\begin{equation}
    \norm{F\otimes G}{\mathcal L(V\otimes W,V^\prime\otimes W^\prime)}
    =\norm{F}{\mathcal L(V,V^\prime)}
    \norm{G}{\mathcal L(W,W^\prime)}.
\end{equation}
\end{lemma}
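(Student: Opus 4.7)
The plan is to establish the two inequalities $\|F\otimes G\|\le\|F\|\|G\|$ and $\|F\otimes G\|\ge\|F\|\|G\|$ separately. The lower bound is easy: for any $\epsilon>0$ pick unit vectors $v\in V$, $w\in W$ with $\|Fv\|>\|F\|-\epsilon$ and $\|Gw\|>\|G\|-\epsilon$. Then $v\otimes w$ has unit norm in $V\otimes W$ (by~\eqref{eq:tp_inner_pr} applied with $v_1=v_2=v$, $w_1=w_2=w$), and
\begin{equation*}
    \|(F\otimes G)(v\otimes w)\|_{V'\otimes W'}=\|Fv\|_{V'}\|Gw\|_{W'}>(\|F\|-\epsilon)(\|G\|-\epsilon),
\end{equation*}
which gives $\|F\otimes G\|\ge\|F\|\|G\|$ upon letting $\epsilon\to 0$.

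For the upper bound, the cleanest route is to factor $F\otimes G=(F\otimes I_{W'})\circ(I_V\otimes G)$ and to bound each factor by $\|F\|$ and $\|G\|$, respectively. To bound $I_V\otimes G$, I take a generic element of the algebraic tensor product $\mathcal{E}$ and apply Gram--Schmidt to its $V$-components so that it can be written in the form $u=\sum_{i=1}^N \zeta_i\otimes w_i$ with $\{\zeta_i\}$ orthonormal in $V$. Then the inner product formula~\eqref{eq:tp_inner_pr} gives
\begin{equation*}
    \|u\|_{V\otimes W}^2=\sum_{i=1}^N\|w_i\|_W^2,\qquad \|(I_V\otimes G)u\|_{V\otimes W'}^2=\sum_{i=1}^N\|Gw_i\|_{W'}^2\le\|G\|^2\sum_{i=1}^N\|w_i\|_W^2.
\end{equation*}
The analogous estimate for $F\otimes I_{W'}$ is symmetric, and composing yields $\|F\otimes G\|\le\|F\|\|G\|$ on $\mathcal{E}$, hence on all of $V\otimes W$ by density.

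The main obstacle is the upper bound, and specifically the step where one wants to invoke Pythagoras on $(I_V\otimes G)u$: this requires the $\{\zeta_i\}$ to be orthonormal, but a naive representation $u=\sum c_{ij}\,e_i\otimes f_j$ in fixed orthonormal bases would leave cross terms when applying $G$, since $\{Gf_j\}$ need not be orthogonal. The Gram--Schmidt reduction to rank-minimal form with an orthonormal system on the untouched factor is what makes the argument go through, and is the only nontrivial ingredient; everything else is bookkeeping and passing from the algebraic tensor product to its completion by continuity.
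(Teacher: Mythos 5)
Your proof is correct. The paper does not actually prove this lemma --- it simply cites \cite[Chapter 8]{Reed1980} --- and your argument (lower bound via near-maximizing elementary tensors, upper bound via the factorization $F\otimes G=(F\otimes I_{W'})\circ(I_V\otimes G)$ with an orthonormalized representation of the untouched factor so that Pythagoras applies) is precisely the standard proof given in that reference, carried out in full.
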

Definition~\ref{def:tp_operators} and Lemma~\ref{lem:tp_bounded_op} generalize to the tensor product of any finite number of bounded operators on Hilbert spaces.

%%%%%%%%%%%%%%%%%%%%%%%%%%%%%%%%%%%%%%%%%%%%%%%%%%%%%%%%
\subsection{Commuting quasi-interpolation operators in $n$ dimensions}
%%%%%%%%%%%%%%%%%%%%%%%%%%%%%%%%%%%%%%%%%%%%%%%%%%%%%%%%

Let us take $F=G=\wint_0$, where $\wint_0$ is the commuting quasi-interpolation operator in one dimension from Section~\ref{sec:quasi-1d}. Using Definition~\ref{def:tp_operators} and the Riesz representation theorem, we define the tensor product operator 
\begin{equation*}
    \wint_0\otimes\wint_0\colon L^2\Lambda^0\otimes L^2\Lambda^0 
    \rightarrow \P_3\Lambda^0\otimes\P_3\Lambda^0.
\end{equation*}
In the same way, we define 
\begin{align*}
    \wint_0\otimes\wint_1&\colon L^2\Lambda^0\otimes L^2\Lambda^1 
    \rightarrow \P_3\Lambda^0\otimes\P_2\Lambda^1,\\
    \wint_1\otimes\wint_0&\colon L^2\Lambda^1\otimes L^2\Lambda^0 
    \rightarrow \P_2\Lambda^1\otimes\P_3\Lambda^0,\\
    \wint_1\otimes\wint_1&\colon L^2\Lambda^1\otimes L^2\Lambda^1
    \rightarrow \P_2\Lambda^1\otimes\P_2\Lambda^1.
\end{align*}

\begin{definition}
\label{def:tp_pi}
Given the commuting quasi-interpolation operators in one dimension from section~\ref{sec:quasi-1d}, we define the tensor product quasi-interpolator in two dimensions for $k=0,1,2$, namely
\begin{gather*}
      \winttens{2}{k}:\complexLtens{2}{k}\rightarrow\complexPtens{2}{k}
\end{gather*}
by
\begin{equation}
\label{eq:wint2}
\begin{aligned}
    \winttens{2}{0}&=\wint_0\otimes \wint_0,\\
    \winttens{2}{1}&=\wint_0\otimes \wint_1 + \wint_1\otimes \wint_0,\\
    \winttens{2}{2}&=\wint_1\otimes \wint_1,
\end{aligned}
\end{equation}
where the space $\complexPtens{2}{k}$ has been defined in~\eqref{eq:Ptens}, and the space $\complexLtens{2}{k}$ is defined as
\begin{gather*}
    \complexLtens{2}{k}
    =\bigoplus_{\substack{i+j=k\\i,j=0,1}} L^2\Lambda^{i}\otimes L^2\Lambda^{j},
    \quad k=0,1,2.
\end{gather*}
\end{definition}
Using the extension by zero in~\eqref{eq:wint_extension}, we can write
\begin{equation}
  \label{eq:wint_tp2}
  \winttens{2}{k}:=\sum_{\substack{i+j=k\\i,j=0, 1}} \wint_i\otimes\wint_j,
\end{equation}
since for $u\in L^2\Lambda^{i'}$, $v\in L^2\Lambda^{j'}$, with $i'+j'=k$, 
we have
\begin{equation}
    \label{eq:wint_tens2}
    \winttens{2}{k}(u\otimes v) =
    \sum_{\substack{i+j=k\\i,j=0,1}}
    \wint_{i}(u)\otimes \wint_{j}(v)
    = \wint_{i'}(u)\otimes\wint_{j'}(v),
\end{equation}
Definition~\eqref{eq:wint_tens2} extends by linearity and density to all elements of $\complexLtens{2}{k}$.

The definition of interpolation operators in the form~\eqref{eq:wint_tp2} generalizes to the tensor product of any finite number of quasi-interpolation operators by the following construction. We start defining the domain of this tensor product operator in $n$ dimensions:
\begin{gather*}
    \complexLtens{n}{k}
    %=\bigoplus_{\mathbf{i}\in\chi_k} \bigotimes_{j=1}^n L^2\Lambda^{i_j}
    =\bigoplus_{\mathbf{i}\in\chi_k} L^2\Lambda^{i_1}\otimes\cdots\otimes L^2\Lambda^{i_n},
    \quad k=0,\ldots,n.
\end{gather*}
\begin{remark}
Note that, by Fubini's theorem, the following isomorphisms hold:
\begin{align}
    \label{eq:L20}
    \complexLtens{n}{0}(\interval_\rho^{\times n}) &= \underbrace{L^2\Lambda^0(\interval_\rho) \otimes\cdots\otimes L^2\Lambda^0(\interval_\rho)}_{n\ \text{times}}\simeq L^2\Lambda^0(\interval_\rho^{\times n}),\\
    \label{eq:L2n}
    \complexLtens{n}{n}(\interval_\rho^{\times n})& =\underbrace{L^2\Lambda^1(\interval_\rho) \otimes\cdots\otimes L^2\Lambda^1(\interval_\rho)}_{n\ \text{times}}\simeq L^2\Lambda^n(\interval_\rho^{\times n}).
\end{align}
\end{remark}

\begin{definition}
Given the commuting quasi-interpolation operators in one dimension from section~\ref{sec:quasi-1d}, we define the tensor product quasi-interpolator in $n$ dimensions, for $k=0,1,\ldots,n$, as
\begin{equation}
  \label{eq:wint_tpn}
  \winttens{n}{k}\colon\complexLtens{n}{k}\rightarrow\complexPtens{n}{k},
  \qquad
  \winttens{n}{k}:= \sum_{\mathbf{i}\in\chi_k}\wint_{i_1}\otimes\cdots\otimes\wint_{i_n}.
\end{equation}
\end{definition}

The operator $\winttens{n}{k}$ 
applies to the tensor product of rank-one functions as follows: given $u_1\otimes\cdots\otimes u_n\in \complexLtens{n}{k}$, with $u_j\in L^2\Lambda^{i_j}$ and $\mathbf{i}=(i_1,\ldots,i_n)\in\chi_k$, it holds
\begin{equation}
\label{eq:wint_tensk}
    \winttens{n}{k}(u_1\otimes\cdots\otimes u_n)
    = \sum_{\substack{\mathbf{i}'\in\chi_k}}
    \wint_{i'_1}(u_1)\otimes\cdots\otimes\wint_{i'_n}(u_n)
    =\wint_{i_1}(u_1)\otimes\cdots\otimes\wint_{i_n}(u_n),
\end{equation}
where in the second equality we have used~\eqref{eq:wint_extension}.
Definition~\eqref{eq:wint_tensk} extends by linearity and density to all elements of $\complexLtens{n}{k}$.

\begin{lemma}
\label{lem:L2-bound-tens}
The quasi-interpolation operator $\winttens{n}{k}$ defined in~\eqref{eq:wint_tpn} is bounded in $L^2(\interval_\rho^{\times n})$.
\end{lemma}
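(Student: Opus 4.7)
The plan is to derive the $L^2$-boundedness of $\winttens{n}{k}$ directly from the one-dimensional estimate in Theorem~\ref{thm:L2-bound} together with the tensor product bound in Lemma~\ref{lem:tp_bounded_op}, exploiting the fact that $\winttens{n}{k}$ is written in~\eqref{eq:wint_tpn} as a finite sum of pure tensor product operators.

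First I would observe that each factor $\wint_{i_j}$ appearing in~\eqref{eq:wint_tpn} is a continuous linear map between Hilbert spaces of one-dimensional forms: either it is bounded on $L^2\Lambda^{i_j}(\interval_\rho)$ with operator norm $C_{\wint_{i_j}}$ coming from Theorem~\ref{thm:L2-bound}, or it is the zero extension from~\eqref{eq:wint_extension}, which is trivially bounded with norm $0$. Next I would apply Lemma~\ref{lem:tp_bounded_op} iteratively (the $n$-fold version follows by induction) to each summand, obtaining
\begin{equation*}
\norm{\wint_{i_1}\otimes\cdots\otimes\wint_{i_n}}{\mathcal L(L^2\Lambda^{i_1}\otimes\cdots\otimes L^2\Lambda^{i_n},\, \P\Lambda^{i_1}\otimes\cdots\otimes\P\Lambda^{i_n})}
= \prod_{j=1}^n C_{\wint_{i_j}},
\end{equation*}
which is a finite constant depending only on $\rho$.

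Then I would assemble the bound for $\winttens{n}{k}$ via the triangle inequality. Since $\complexLtens{n}{k}$ is a finite orthogonal direct sum indexed by $\mathbf{i}\in\chi_k$, with $\#\chi_k=\binom{n}{k}$, and since by the extension-by-zero convention~\eqref{eq:wint_extension} only the diagonal summand contributes on a rank-one element (as already used in~\eqref{eq:wint_tensk}), we get
\begin{equation*}
\norm{\winttens{n}{k}}{\mathcal L(\complexLtens{n}{k},\,\complexPtens{n}{k})}
\;\leq\; \sum_{\mathbf{i}\in\chi_k}\prod_{j=1}^n C_{\wint_{i_j}}
\;<\;\infty.
\end{equation*}
Finally, the identification between $\complexLtens{n}{k}$ and $L^2\Lambda^k(\interval_\rho^{\times n})$, which holds as a genuine isomorphism in the $L^2$ case by Fubini (extending~\eqref{eq:L20}--\eqref{eq:L2n} to the intermediate values of $k$ by decomposing a $k$-form according to the multi-index $\mathbf{i}$), lets us read the above bound as the claimed $L^2(\interval_\rho^{\times n})$ estimate.

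I do not anticipate any serious obstacle: the content of the lemma is essentially an unpacking of definitions and a combination of Theorem~\ref{thm:L2-bound} with Lemma~\ref{lem:tp_bounded_op}. The only minor care needed is in handling the direct-sum structure of $\complexLtens{n}{k}$ and the zero extensions, making sure that the sum in~\eqref{eq:wint_tpn} is interpreted consistently on each summand of the domain.
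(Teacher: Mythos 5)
Your argument is correct and follows essentially the same route as the paper: apply Lemma~\ref{lem:tp_bounded_op} to each rank-one summand of~\eqref{eq:wint_tpn}, use the triangle inequality over the finite index set $\chi_k$, and invoke the one-dimensional constants from Theorem~\ref{thm:L2-bound} (the paper simply majorizes your product $\prod_j C_{\wint_{i_j}}$ by $C_\wint^n$ with $C_\wint=\max\{C_{\wint_0},C_{\wint_1}\}$, giving the bound $\binom{n}{k}C_\wint^n$). Your added care about the zero extensions and the Fubini identification of $\complexLtens{n}{k}$ with $L^2\Lambda^k(\interval_\rho^{\times n})$ is consistent with the paper's surrounding remarks and does not change the argument.
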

\begin{proof}
Lemma~\ref{lem:tp_bounded_op} states that the tensor product of bounded operators on Hilbert spaces is bounded, with constant given as product of the individual constants. Then, it holds:
\begin{align*}
    \norm{\winttens{n}{k}}{}
    & \leq \sum_{\mathbf{i}\in\chi_k} \norm{\wint_{i_1}\otimes\cdots\otimes\wint_{i_n}}{}
    = \sum_{\mathbf{i}\in\chi_k} \norm{\wint_{i_1}}{}\cdots\norm{\wint_{i_n}}{}\\
    & \leq \left(\sum_{\mathbf{i}\in\chi_k} 1\right) C_\wint^n
    = \bc{n}{k} C_\wint^n,
\end{align*}
where $C_\wint=\max\{C_{\wint_0},C_{\wint_1}\}$, the constants $C_{\wint_0}, C_{\wint_1}$ being introduced in Theorem~\ref{thm:L2-bound}.
\end{proof}

The following lemma shows that $\winttens{n}{k}$ is a co-chain operator.
\begin{lemma}
The tensor product operator $\winttens{n}{k}$ commutes with the exterior derivative.
More precisely, for $u\in (H\Lambda^{\otimes n})^k$, there holds
\begin{gather}
    \winttens{n}{k+1}(d^k u)
    = d^k\winttens{n}{k} u
\end{gather}
\end{lemma}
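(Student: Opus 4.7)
The plan is to verify the identity on rank-one smooth tensors and then extend by linearity and density. Since $\complextens{C^\infty}{n}{k}$ is dense in $\complexHtens{n}{k}$ (as noted earlier), and since both $\winttens{n}{k+1} \circ d^k$ and $d^k \circ \winttens{n}{k}$ are bounded linear maps (using lemma~\ref{lem:L2-bound-tens} and continuity of $d$ from $H\Lambda^k$ to $L^2\Lambda^{k+1}$), it suffices to treat a generic smooth rank-one element $u = u_1 \otimes \cdots \otimes u_n$ with $u_j \in C^\infty\Lambda^{i_j}(\interval_\rho)$ and $\mathbf{i} = (i_1,\dots,i_n) \in \chi_k$.

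First, I would apply formula~\eqref{eq:d-tensor-k} to get
\begin{equation*}
  d^k u = \sum_{j=1}^n \theta_j \, u_1 \otimes \cdots \otimes d u_j \otimes \cdots \otimes u_n,
\end{equation*}
where $\theta_j = (-1)^{\sum_{\ell<j}i_\ell}$ depends only on $\mathbf i$. Only the indices $j$ with $i_j = 0$ contribute non-trivially, since $du_j = 0$ whenever $u_j$ is already a $1$-form on the interval. For each such $j$, the summand lies in the component of $\complexHtens{n}{k+1}$ indexed by $\mathbf{i} + e_j \in \chi_{k+1}$, so formula~\eqref{eq:wint_tensk} together with the extension-by-zero convention~\eqref{eq:wint_extension} yields
\begin{equation*}
  \winttens{n}{k+1}(d^k u) = \sum_{j:\, i_j = 0} \theta_j \, \wint_{i_1}(u_1) \otimes \cdots \otimes \wint_1(d u_j) \otimes \cdots \otimes \wint_{i_n}(u_n).
\end{equation*}

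On the other hand, $\winttens{n}{k}u = \wint_{i_1}(u_1) \otimes \cdots \otimes \wint_{i_n}(u_n)$ lies in the component indexed by the same $\mathbf i$, so applying~\eqref{eq:d-tensor-k} gives
\begin{equation*}
  d^k \winttens{n}{k} u = \sum_{j=1}^n \theta_j \, \wint_{i_1}(u_1) \otimes \cdots \otimes d\, \wint_{i_j}(u_j) \otimes \cdots \otimes \wint_{i_n}(u_n),
\end{equation*}
with the same signs $\theta_j$ since the multi-index is unchanged. The two expressions are then matched term by term using lemma~\ref{lem:commuting-quasi-interp}: when $i_j = 0$ we have $d\, \wint_0(u_j) = \wint_1(du_j)$, and when $i_j = 1$ the factor $d\, \wint_1(u_j)$ vanishes because $\wint_1(u_j)$ is a top form on the one-dimensional factor. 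Hence the two sums agree, first on smooth rank-one tensors, then on finite sums by linearity, and finally on all of $\complexHtens{n}{k}$ by density and continuity.

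The main obstacle is simply bookkeeping: ensuring that the Koszul-type signs $\theta_j$ are consistent on both sides (they are, because $\winttens{n}{k}$ preserves the multi-index $\mathbf i$) and that the terms dropped via~\eqref{eq:wint_extension} on the left exactly correspond to the terms killed by $d\,\wint_1 = 0$ on the right. No further analytic difficulty arises beyond invoking the one-dimensional commutativity of lemma~\ref{lem:commuting-quasi-interp} factorwise.
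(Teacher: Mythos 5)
Your proof is correct and follows essentially the same route as the paper: verify the identity on rank-one tensors via formula~\eqref{eq:d-tensor-k}, apply the one-dimensional commutativity of Lemma~\ref{lem:commuting-quasi-interp} factorwise together with the extension-by-zero convention~\eqref{eq:wint_extension}, and extend by linearity and density. Your version is slightly more explicit about which terms vanish on each side and about the boundedness needed for the density step, but the argument is the same.
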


\begin{proof}
We start proving the result on rank-one functions $u_1\otimes\cdots\otimes u_k\in\complexLtens{n}{k}$, with $u_j\in L^2\Lambda^{i_j}$, $\mathbf{i}=(i_1,\ldots,i_n)\in\chi_k$. Using~\eqref{eq:d-tensor-k}, the linearity of $\winttens{n}{k}$, ~\eqref{eq:wint_extension} and Lemma~\ref{lem:commuting-quasi-interp}, there holds
\begin{align*}
    \winttens{n}{k+1}(d^k(u_1\otimes\cdots\otimes u_n))
    & = \winttens{n}{k+1} \left(\sum_{j=1}^n\theta_j\, u_1\otimes\cdots\otimes d^{i_j}u_j\otimes\cdots\otimes u_n\right)\\
    & = \sum_{j=1}^n\theta_j \winttens{n}{k+1}(u_1\otimes\cdots\otimes d^{i_j}u_j\otimes\cdots\otimes u_n)\\
    & = \sum_{j=1}^n\theta_j \wint_{i_1}(u_1)\otimes\cdots\otimes\wint_{i_j+1}(d^{i_j}u_j)\otimes\cdots\otimes\wint_{i_n}(u_n)\\
    & = \sum_{j=1}^n\theta_j \wint_{i_1}(u_1)\otimes\cdots\otimes d^{i_j}\wint_{i_j}(u_j)\otimes\cdots\otimes\wint_{i_n}(u_n)\\
    & = d^k(\winttens{n}{k}(u_1\otimes\cdots\otimes u_n)).
\end{align*}
The result extends by linearity and density to all elements of the tensor product space $\complexLtens{n}{k}$.
\end{proof}

\begin{remark}
A similar tensor product construction has been applied to bounded cochain projectors in~\cite{Bonizzoni2013}.
\end{remark}

%%%%%%%%%%%%%%%%%%%%%%%%%%%%%%%%%%%%%%%%%%%%%%%%%%%%%%%%%%%%
\section{Higher-order polynomial spaces}
\label{sec:higher-order}
%%%%%%%%%%%%%%%%%%%%%%%%%%%%%%%%%%%%%%%%%%%%%%%%%%%%%%%%%%%%

In this section, we repeat the construction of sections~\ref{sec:1d_complex_I} and~\ref{sec:quasi_interpolation} to obtain commuting quasi-interpolation operators for finite elements of arbitrary polynomial order.

%%%%%%%%%%%%%%%%%%%%%%%%%%%%%%%%%%%%%%%%%%%%%%%%%%%%%%%%%%%%
\subsection{Canonical interpolation operators}
%%%%%%%%%%%%%%%%%%%%%%%%%%%%%%%%%%%%%%%%%%%%%%%%%%%%%%%%%%%%

The canonical commuting interpolation operators in section~\ref{sec:1d_complex_I} extend to higher order polynomial spaces in a straight-forward way,
if we introduce additional node functionals 
and corresponding basis functions.
To this end, let $\ell_m \in \P_m$ be the Legendre polynomial of degree $m$ on the interval $\interval$, normalized such that $\ell_m(1) = 1$. Then, the sequence $\{\ell_m\}_{m=0,\dots}$ is mutually $L^2(\interval)$-orthogonal. 
We also introduce the integrated and twice integrated Legendre polynomials
\begin{gather}
        L_m(x) = \int_0^x \ell_m(t)\,dt,
        \qquad
        K_m(x) = \int_0^x L_m(t)\,dt.
\end{gather}

We recall the well known relation 
\begin{gather}
    \label{eq:Legendre-integral}
    2(2m+1) L_m(x) = \ell_{m+1}(x) - \ell_{m-1}(x),
\end{gather}
which implies the following properties:
\begin{enumerate}[(i)]
    \item $L_m(0) = L_m(1) = 0$ and equivalently $K^\prime_m(0) = K^\prime_m(1) = 0$ for $m\geq 1$, since % $\ell_m$ has zero mean for $m\geq 1$.
    $L_m$ is the difference of two Legendre polynomials of equal parity.
    \item $K_m(0) = K_m(1) = 0$, for $m\ge 2$, since $K_m$ ($m\ge 2$) is the integral of a function with zero mean vanishing at the interval ends.
\end{enumerate}

We define the node functionals for higher-order polynomial finite elements  $\P_m\Lambda^0$ and $\P_{m-1}\Lambda^1$ by the following interpolation conditions.
\begin{itemize}
    \item For $\P_m\Lambda^0(\interval)$, use
    \begin{gather}
        \label{eq:i0-1d-k}
        \begin{aligned}
            \nodal^0_1 (u) &= u'(0) & \qquad\nodal^0_{m+1} (u) &= u(0) + u(1)\\
            \nodal^0_2 (u) &= u'(1)\\
            \nodal^0_{i+3}(u) &= \int_{\interval} \ell_i u^\prime \dx,
        & i&=0,\dots,m-3.
        \end{aligned}
    \end{gather}
    While $\nodal^0_1$ and $\nodal^0_2$ are identical to~\eqref{eq:i0-1d}, we replaced $\nodal^0_3$ by an integral over $u'$, which evaluates to the same as the original. The functional $\nodal^0_4$ stayed the same, but now received the index $m+1$ to be conforming with lemma~\ref{lemma:commute_nodal}.
    In what follows, we will refer to $\{\nodal^0_1,\nodal^0_2,\nodal^0_3,\nodal^0_{m+1}\}$ as the original node functionals and introduce the corresponding index set $J_o = \{1,2,3,m+1\}$.
    
    \item For $\P_{m-1}\Lambda^1(\interval)$, introduce the node functionals
    \begin{gather}
        \label{eq:i1-1d-k}
         \begin{split}
            \nodal^1_1 (v) &= v(0)\\
            \nodal^1_2 (v) &= v(1)
        \end{split}
        \qquad \nodal^1_{i+3} (v) = \int_{\interval} \ell_i v\dx
        \qquad i=0,\dots,m-3.
    \end{gather}
\end{itemize}

Moreover, we choose the basis $\{\phi^0_i\}$ and $\{\phi^1_i\}$ for the spaces $\P_m=\P_m\Lambda^0(\interval)$ and $\P_{m-1}=\P_{m-1}\Lambda^1(\interval)$, respectively, as follows:
\begin{itemize}
    \item $\phi^0_1,\ \phi^0_2,\ \phi^0_3$ and $\phi^0_{m+1}$ are chosen identical to~\eqref{eq:basis-p3}. The remaining polynomials are chosen as 
    \begin{gather}
        \label{eq:i0-k-basis}
        \phi^0_i(x) = K_{i-2}(x),\qquad i=4,\dots,m,
    \end{gather}
    where we note that $K_{i-2}$ has degree $i$.
    \item $\phi^1_1,\ \phi^1_2$ and $\phi^1_3$ are chosen identical to~\eqref{eq:basis-p2}. The remaining polynomials are chosen as 
    \begin{gather}
        \label{eq:i1-k-basis}
        \phi^1_i(x) = L_{i-2}(x),\qquad i=4,\dots,m,
    \end{gather}
    where we note that $L_{i-2}$ has degree $i-1$.
\end{itemize}

The canonical interpolation operators, as before $\cint_0\colon \Lambda^0(\interval)\rightarrow\P_m\Lambda^0(\interval)$ and $\cint_1\colon \Lambda^1(\interval)\rightarrow\P_{m-1}\Lambda^1(\interval)$ are then defined as
\begin{gather}
    \label{eq:cint-1d-k}
    \cint_0 u(x) = \sum_{i=1}^{m+1} \nodal^0_i(u) \,\phi^0_i(x),
    \qquad \cint_1 v(x) = \sum_{j=1}^m \nodal^1_j (v) \,\phi^1_j(x).
\end{gather}

\begin{lemma}
    The space $\P_m=\P_m\Lambda^0(\interval)$ with the node functionals in~\eqref{eq:i0-1d-k} forms a unisolvent finite element.
\end{lemma}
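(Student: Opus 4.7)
My plan is to establish unisolvency by a dimension count together with an injectivity argument. First I would note that the list in~\eqref{eq:i0-1d-k} consists of the two endpoint derivative functionals $\nodal^0_1,\nodal^0_2$, the $m-2$ integral functionals $\nodal^0_3,\dots,\nodal^0_m$ (obtained as $i$ ranges over $0,\dots,m-3$), and the sum-of-endpoint-values functional $\nodal^0_{m+1}$, for a total of $m+1=\dim\P_m$. Since the dimensions match, it suffices to show that any $u\in\P_m$ annihilated by all of these functionals must vanish.

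The crux of the argument is to pass to the derivative. Under the vanishing hypothesis, $u'\in\P_{m-1}$ satisfies $\int_{\interval}\ell_i\,u'\dx=0$ for $i=0,\dots,m-3$. Since $\ell_0,\dots,\ell_{m-3}$ form a basis of $\P_{m-3}$ and the Legendre family $\{\ell_j\}$ is mutually $L^2(\interval)$-orthogonal, these conditions force $u'$ into the orthogonal complement of $\P_{m-3}$ inside $\P_{m-1}$, namely the two-dimensional subspace $\operatorname{span}\{\ell_{m-2},\ell_{m-1}\}$.

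The remaining two conditions $u'(0)=u'(1)=0$ coming from $\nodal^0_1,\nodal^0_2$ then yield a $2\times 2$ homogeneous linear system for the coefficients of $u'$ in this basis. Using the given normalization $\ell_k(1)=1$ and the standard parity identity $\ell_k(0)=(-1)^k$ on the reference interval, the coefficient matrix
\[
\begin{pmatrix} (-1)^{m-2} & (-1)^{m-1} \\ 1 & 1 \end{pmatrix}
\]
has determinant $2(-1)^{m-2}\neq 0$, so $u'\equiv 0$. Hence $u$ is constant on $\interval$, and the final functional $\nodal^0_{m+1}(u)=2u$ then forces $u\equiv 0$. The only step requiring care is verifying that the reduced $2\times 2$ system is nonsingular, which hinges on the endpoint values and parity behavior of the Legendre polynomials; the rest is a combination of dimension counting and orthogonal projection.
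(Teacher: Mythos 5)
Your proof is correct, but it takes a different route from the paper's. The paper expands $p$ in the dual basis $\{\phi^0_i\}$ built from the twice-integrated Legendre polynomials $K_{i-2}$: the double roots of $K_{i-2}$ at $0$ and $1$ decouple the four ``original'' Hermitian-type conditions (forcing $\alpha_i=0$ for $i\in J_o$), and the remaining coefficients are killed one at a time by an induction exploiting the orthogonality relation $2(2m+1)L_m=\ell_{m+1}-\ell_{m-1}$. You instead bypass the basis entirely and argue on $u'$: the $m-2$ moment conditions place $u'$ in the $L^2$-orthogonal complement of $\P_{m-3}$ inside $\P_{m-1}$, i.e.\ in $\operatorname{span}\{\ell_{m-2},\ell_{m-1}\}$, and the two endpoint-derivative conditions then reduce to a $2\times 2$ system whose nonsingularity follows from $\ell_k(1)=1$ and the parity identity $\ell_k(0)=(-1)^k$ for the shifted Legendre polynomials; the constant is finally removed by $\nodal^0_{m+1}$. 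Your argument is shorter and does not require introducing $K_m$ or the induction, at the cost of invoking the endpoint parity of the Legendre family (which the paper never states explicitly, though it is standard and consistent with the normalization $\ell_m(1)=1$). The paper's basis-expansion route has the side benefit of exhibiting the (triangular) structure of the matrix $\bigl(\nodal^0_i(\phi^0_j)\bigr)$, which is also what one needs to verify the duality condition~\eqref{eq:interpolation-condition} used elsewhere. Both proofs are complete; just make sure the degenerate low-order cases (e.g.\ $m=3$, where $\P_{m-3}=\P_0$) are covered, which they are.
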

\begin{proof}
 First, we note that the dimension of $\P_m$ equals the number of node functionals. Thus, it is sufficient to show that for $p\in \P_m$ there holds
 \begin{equation}
    \label{eq:unisolvence-pk}
    \biggl[\quad\nodal^0_i(p) = 0\quad \forall\,i = 1,\dots,m+1\quad\biggr]
    \quad\Longrightarrow\quad
    \Bigl[\;p\equiv 0\;\Bigr].
 \end{equation}

We show~\eqref{eq:unisolvence-pk} by writing $p$ as linear combination of the basis $\{\phi^0_i\}$:
 \begin{gather*}
     p(x) = \sum_{i=1}^{m+1} \alpha_i \phi^0_i(x).
 \end{gather*}
Since $K_i$ for $i\geq 2$ has double roots at 0 and 1, there holds
 \begin{gather}
     \nodal^0_i(\phi^0_j) = 0 \qquad i\in J_o, \quad j=4,\dots,m.
 \end{gather}
 Therefore, by standard Hermitian interpolation conditions, we obtain
 \begin{gather}
     \Bigl[\;\nodal^0_i(p) = 0 \quad\forall i\in J_o\;\Bigr]
     \quad \Longrightarrow \quad
     \Bigl[\;\alpha_i = 0\quad\forall i\in J_o\;\Bigr].
 \end{gather}
 
 For the remaining coefficients we prove $\alpha_i = 0$ by induction. First note for $i=1,\dots,m-3$
 \begin{align*}
    \nodal^0_{i+3}(p)
    &= \sum_{j=4}^{m} \int_{\interval} \ell_i (x) \alpha_j K^\prime_{j-2}(x)\dx
    = \sum_{j=4}^{m} \int_{\interval} \ell_i (x) \alpha_j L_{j-2}(x)\dx\\
    &\stackrel{\eqref{eq:Legendre-integral}}{=} 
    \sum_{j=4}^{m} 
    \tfrac{\alpha_j}{2(2j-3)}
    \int_{\interval} \ell_i (x)  \bigl(\ell_{j-1}(x) - \ell_{j-3}(x)\bigr)\dx.
\end{align*}

Thus, by orthogonality of the Legendre polynomials
\begin{gather}
    \nodal^0_4(p) = \tfrac{\alpha_4}{10}
    \int_{\interval} \ell_1^2 (x) \dx.
\end{gather}
We conclude that $\nodal^0_4(p)=0$ implies $\alpha_4 = 0$. Assume now that 
$4<n<m$, and $\alpha_k=0$ for all $1 \le k \le n-1$.
Then,
\begin{align}
    \nodal^0_{n}(p)
    &= \sum_{j=n}^{m} 
    \alpha_j\tfrac1{2(2j-3)}
    \int_{\interval} \ell_{n-3} (x)  \bigl(\ell_{j-1}(x) - \ell_{j-3}(x)\bigr)\dx\\
    &= \tfrac{\alpha_n}{2(2n-3)} 
    \int_{\interval}  \ell_{n-3}^2(x) \dx.
\end{align}
Hence, $\nodal^0_n(p)=0$ implies $\alpha_n = 0$.
\end{proof}

By a similar, but simpler argument, we can prove
\begin{lemma}
    The space $\P_{m-1}=\P_{m-1}\Lambda^1(\interval)$ with the node functionals in~\eqref{eq:i1-1d-k} forms a unisolvent finite element.
\end{lemma}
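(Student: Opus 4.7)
The plan is to follow the template of the preceding proof for $\P_m\Lambda^0$: show that the number of degrees of freedom equals $\dim \P_{m-1} = m$ (here $2+(m-3+1)=m$), write an arbitrary $p\in\P_{m-1}$ in the basis $\{\phi^1_i\}$ as $p=\sum_{i=1}^{m}\alpha_i\phi^1_i$, assume all node functionals vanish on $p$, and deduce $\alpha_i=0$ for each $i$. The argument is indeed simpler than for $\P_m\Lambda^0$ because the higher-order basis functions $\phi^1_i=L_{i-2}$ for $i\ge 4$ vanish at both endpoints (property (i) of $L_m$) and are $L^2$-orthogonal to constants, so they are invisible to the three original node functionals $\nodal^1_1,\nodal^1_2,\nodal^1_3$.

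First I would peel off the coefficients $\alpha_1,\alpha_2,\alpha_3$. Using $\phi^1_1(0)=1,\ \phi^1_1(1)=0$, $\phi^1_2(0)=0,\ \phi^1_2(1)=1$, $\phi^1_3(0)=\phi^1_3(1)=0$, and $L_{j-2}(0)=L_{j-2}(1)=0$ for $j\ge 4$, the vanishing of $\nodal^1_1(p)$ and $\nodal^1_2(p)$ immediately forces $\alpha_1=\alpha_2=0$. For $\nodal^1_3$, note that $\ell_0\equiv 1$, so $\nodal^1_3(p)=\int_{\interval}p\dx$. A direct computation gives $\int_{\interval}\phi^1_3\dx = 1$, while for $j\ge 4$ the identity $2(2j-3)L_{j-2}=\ell_{j-1}-\ell_{j-3}$ together with $L^2$-orthogonality of the Legendre polynomials against $\ell_0$ yields $\int_{\interval}L_{j-2}\dx = 0$. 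Hence $\nodal^1_3(p)=\alpha_3$, so $\alpha_3=0$.

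It remains to handle the higher-order coefficients $\alpha_n$ for $n=4,\dots,m$ by induction on $n$, using exactly the same Legendre-orthogonality trick as in the previous lemma. For $n\in\{4,\dots,m\}$, using~\eqref{eq:Legendre-integral} in the shifted form
\begin{equation*}
  \int_{\interval}\ell_{n-3}(x) L_{j-2}(x)\dx
  =\frac{1}{2(2j-3)}\int_{\interval}\ell_{n-3}(x)\bigl(\ell_{j-1}(x)-\ell_{j-3}(x)\bigr)\dx,
\end{equation*}
orthogonality of Legendre polynomials kills every term except $j=n$ (which gives a nonzero multiple of $\alpha_n\int_{\interval}\ell_{n-3}^2\dx$) and, when $n\ge 6$, the term $j=n-2$. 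Thus
\begin{equation*}
  \nodal^1_n(p)=c_n\alpha_n+d_n\alpha_{n-2},
\end{equation*}
with $c_n\ne 0$, where $d_n=0$ for $n\in\{4,5\}$. Proceeding inductively from $n=4$ upward and using that $\alpha_{n-2}=0$ at each step by the induction hypothesis (for $n=4,5$ the term is absent; for $n\ge 6$ it is already known), the vanishing of $\nodal^1_n(p)$ forces $\alpha_n=0$.

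I do not foresee a real obstacle: the only subtle point is the reindexing in~\eqref{eq:Legendre-integral} and keeping track of which pairs $(i,j)$ fall in the admissible range $j\ge 4$, which determines whether the induction step is scalar or bivariate. Once that bookkeeping is done, the proof is a clean descent through the three original node functionals followed by a triangular induction on the Legendre-based functionals.
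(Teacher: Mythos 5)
Your proof is correct and is precisely the ``similar, but simpler argument'' the paper alludes to without writing out: it mirrors the unisolvence proof for $\P_m\Lambda^0(\interval)$, peeling off $\alpha_1,\alpha_2,\alpha_3$ via the dual basis property and the vanishing of $L_{j-2}$ and of $\int_\interval L_{j-2}\dx$, then running the same Legendre-orthogonality induction (the paper's version of that induction assumes $\alpha_k=0$ for all $k\le n-1$, which makes the $j=n-2$ term you track explicitly drop out automatically, but the two bookkeepings are equivalent). No gaps.
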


Analog to lemma~\ref{lemma:commuting-1d}, we have the following lemma.
\begin{lemma}
\label{lemma:commuting-1d-k}
The following diagram commutes:
\begin{equation}
\label{eq:diagram-k}
\begin{tikzcd}[column sep=6pc]
C^1\Lambda^0(\interval) \arrow{r}{d} \arrow{d}{I} & 
  C^0\Lambda^1(\interval) \arrow{d}{I} \\
\P_m\Lambda^0(\interval) \arrow{r}{d} &
  \P_{m-1}\Lambda^1(\interval)
\end{tikzcd}
\end{equation} that is,
% The derivative and the interpolation operators commute, that is, 
for every $u\in C^1\Lambda^0(\interval)$, there holds
\begin{gather}
    \label{eq:commute-1d-k}
    d^0 \cint_0 u = \cint_1 d^0 u.
\end{gather}
\end{lemma}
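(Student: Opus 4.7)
The plan is to apply Lemma~\ref{lemma:commute_nodal} to the bases $\{\phi^0_i\}_{i=1}^{m+1}$ and $\{\phi^1_i\}_{i=1}^{m}$ defined in~\eqref{eq:i0-k-basis}--\eqref{eq:i1-k-basis} (together with the lower-order pieces inherited from~\eqref{eq:basis-p3} and~\eqref{eq:basis-p2}) and the node functionals~\eqref{eq:i0-1d-k}--\eqref{eq:i1-1d-k}. Since $d$ on $\P_m\Lambda^0(\interval)$ has kernel equal to the constants, the rank is $r=m$, and we therefore need to verify that the first $m$ basis functions and node functionals satisfy the commuting relations~\eqref{eq:commute-basis} and~\eqref{eq:commute-nodal}, while the single remaining basis function $\phi^0_{m+1}$ lies in $\ker d$.

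First I would check the basis relations. For $i=1,2,3$ and $i=m+1$ this is exactly the content of lemma~\ref{lemma:commuting-1d} (the polynomials are unchanged, only the label of the constant function has shifted from $4$ to $m+1$). For $i=4,\dots,m$ the relation $d\phi^0_i = \phi^1_i$ reduces to the identity $K'_{i-2}=L_{i-2}$, which is built into the definitions of $K_m$ and $L_m$. Finally $d\phi^0_{m+1}=d(\tfrac12)=0$, so $\phi^0_{m+1}$ spans $\ker d$ as required.

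Next I would check the node-functional relations. For $i=1,2$, $\nodal^1_i(du)=(du)(x_i)=u'(x_i)=\nodal^0_i(u)$ at the respective endpoint. For $i=3$, using $\ell_0\equiv 1$ and the fundamental theorem of calculus,
\begin{equation*}
    \nodal^1_3(du) = \int_\interval \ell_0\, u'\dx = \int_\interval u'\dx = u(1)-u(0),
\end{equation*}
while the reindexed $\nodal^0_3(u)=\int_\interval \ell_0 u'\dx$ evaluates to the same quantity. For $i=4,\dots,m$ the equality
\begin{equation*}
    \nodal^1_i(du)=\int_\interval \ell_{i-3}\,(du)\dx=\int_\interval \ell_{i-3}\,u'\dx=\nodal^0_i(u)
\end{equation*}
is immediate from the definitions. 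There is no condition to check at index $m+1$ since that functional exists only on the $0$-form side.

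Having verified both hypotheses of lemma~\ref{lemma:commute_nodal}, the conclusion $d^0\cint_0 u = \cint_1 d^0 u$ for every $u\in C^1\Lambda^0(\interval)$ follows at once. No genuine obstacle arises; the only subtle point is bookkeeping, namely that the constant basis function $\phi^0_{m+1}$ (not $\phi^0_4$) must play the role of the kernel generator so that the index range $i=1,\dots,r=m$ in~\eqref{eq:commute-basis} and~\eqref{eq:commute-nodal} matches the relabelled node functionals of~\eqref{eq:i0-1d-k}.
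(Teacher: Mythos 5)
Your proposal is correct and follows essentially the same route as the paper: both verify the hypotheses of Lemma~\ref{lemma:commute_nodal}, reusing Lemma~\ref{lemma:commuting-1d} for the original functionals with indices in $J_o$, checking $\nodal^1_{i+3}(du)=\int_\interval \ell_i u'\dx=\nodal^0_{i+3}(u)$ for the new ones, and noting $d\phi^0_i=\phi^1_i$ for $i\le m$ with $d\phi^0_{m+1}=0$. Your explicit bookkeeping of $r=m$ and of the relabelled kernel generator $\phi^0_{m+1}$ is a slightly more careful rendering of what the paper leaves implicit.
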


\begin{proof}
We show that lemma~\ref{lemma:commute_nodal} applies to the 
node functionals~\eqref{eq:i0-1d-k} and~\eqref{eq:i1-1d-k}, and the basis functions~\eqref{eq:i0-k-basis} and~\eqref{eq:i1-1d-k}.
 Since the functionals $\nodal^0_i$ with $i\in J_o$ and $\nodal^1_i$ with $i\in\{1,2,3\}$ have not changed except for the reformulation of $\nodal^0_3$, the result of lemma~\ref{lemma:commuting-1d} still applies to those. For the remaining ones, we have
\begin{gather}
    \nodal^1_{i+3}(du) = \int_{\interval} \ell_i du\dx
    = \int_{\interval} \ell_i u'\dx = \nodal^0_{i+3}(u),
    \qquad i=1,\dots,m-3.
\end{gather}
We conclude observing that, by definition, $d\phi^0_i=\phi^1_i$ for $i=1,\ldots,m$, and $d\phi^0_{m+1}=0$ holds.
\end{proof}

%%%%%%%%%%%%%%%%%%%%%%%%%%%%%%%%%%%%%%%%%%%%%%%%%%
\subsection{Quasi-interpolation operators}
\label{ssec:quasi_interpolation_k}
%%%%%%%%%%%%%%%%%%%%%%%%%%%%%%%%%%%%%%%%%%%%%%%%%%

Again, we introduce node functionals
on the perturbed interval $\tilde\interval_{y_l,y_r}$.
To this end, it is sufficient to define transformed versions of the new node functionals in~\eqref{eq:i0-1d-k} and~\eqref{eq:i1-1d-k}, since the original ones with index in $J_o$ are transformed as before 
(see equations~\eqref{eq:node-tilde} and~\eqref{eq:node1-tilde}).
To this end, let $\{\tilde \ell_i\}$ be the sequence of orthogonal polynomials on $\tilde\interval_{y_l,y_r}$, normalized such that $\tilde\ell_i(y_r) = 1$. The transformed node functionals are:
\begin{align}
    \widetilde{\nodal^0_{i+3}}(u)
    &= \int_{\tilde\interval_{y_l,y_r}} \tilde\ell_i(\tilde x) u'(\tilde x)\dtx,
    \\
    \widetilde{\nodal^1_{i+3}}(u)
    &= \int_{\tilde\interval_{y_l,y_r}} \tilde\ell_i(\tilde x) u(\tilde x)\dtx.
\end{align}
Note that by this definition, we still have the commutation property
\begin{gather}
    \widetilde{\nodal^1_{i+3}}(d u) = \widetilde{\nodal^0_{i+3}}(u)
    \qquad i=0,\dots,m-3.
\end{gather}

We can now define the 
weighted node functionals as in~\eqref{eq:pi0-1d}, and the quasi-interpolation operator as in~\eqref{eq:quasi-interp}. Lemma~\ref{lemma:commute_nodal} applies, hence the quasi-interpolation operators commute with the exterior derivative. Moreover, the tensor product construction of Section~\ref{sec:tensor_complex} leads to commuting quasi-interpolation operators with values in $(\P\Lambda^{\otimes n})^k$ as in~\eqref{eq:Ptens}, where $\P\Lambda^0=\P_m\Lambda^0(\interval)$ and $\P\Lambda^1=\P_{m-1}\Lambda^1(\interval)$.
 
%%%%%%%%%%%%%%%%%%%%%%%%%%%%%%%%%%%%%%%%%%%%%%%%%%%%%%%%%%%%%
\section{Conclusions}
%%%%%%%%%%%%%%%%%%%%%%%%%%%%%%%%%%%%%%%%%%%%%%%%%%%%%%%%%%%%%

The one-dimensional $H^1$-conforming finite element cochain complex based on cubic and higher-order polynomials, and its quasi-interpolation operators were introduced. The tensor product construction was employed to derive (i) $H^1$-conforming finite element cochain complexes on meshes with Cartesian mesh cells of arbitrary dimension; (ii) $L^2$-stable quasi-interpolation cochain operators. 

The construction principle in section~\ref{sec:higher-order} can be generalized to higher differentiability by adding more derivative degrees of freedom at the interval edges and adjusting the remaining degrees of freedom. The argument using integration by parts in theorem~\ref{thm:L2-bound} remains valid if applied multiple times and yields $L^2$-stable quasi-interpolation operators also for this case.

When we refer to meshes with Cartesian mesh cells, we mean that all boundaries of mesh cells are axiparallel. While this is more general than a Cartesian mesh and allows for domains with nontrivial topology, it is nevertheless very restrictive. Lifting this condition is not trivial though. First, it is known that the relation $dV^k\subset V^{k+1}$ does not hold anymore directly for finite element spaces, but only after applying an additional Riesz isomorphism, see~\cite{ArnoldBoffiFalk05}. Second, the tensor product construction of degrees of freedom requires coordinate systems in vertices, which are consistent over all attached cells. This can be achieved at ``regular vertices'', see~\cite{ArndtKanschat}, but it is not clear, whether a construction at irregular vertices can be obtained.

%%%%%%%%%%%%%%%%%%%%%%%%%%%%%%%%%%%%%%%%%%%%%%%%%%%%%%%%%%%%%%%%%%%%%%%%%%%%%%%%%%%%%%%%%%%%%%%%%%%
% \bibliographystyle{alpha}
% \bibliography{bibliography}

\end{document}